\newcommand{\mL}{\mathcal L}
\newcommand{\mLT}{\mathcal L_{\Theta}}
\newcommand{\mHT}{\mathcal H_{\Theta}}
\newcommand{\Th}{\Theta}
\newcommand{\mm}{\mathrm}
\newcommand{\nn}{\nonumber}
\newcommand{\e}{\varepsilon}
\newtheorem{theorem}{Theorem}[section]
\newtheorem{nntheorem}{Theorem}
\newtheorem{claim}[nntheorem]{Claim}
\newtheorem{lemma}[theorem]{Lemma}
\newtheorem{prop}[theorem]{Proposition}
\newtheorem{defi}[theorem]{Definition}
\newtheorem{cor}[theorem]{Corollary}
\newtheorem{remark}[theorem]{Remark}
\newcommand{\norm}[2]{{\| #1 \|}_{#2}}
\newcommand{\Norm}[1]{\norm{#1}{}}
\newcommand{\nab}[1]{\nabla^{#1}}
\newcommand{\nabt}[1]{\widetilde\nabla^{#1}\!}
\newcommand{\sigm}[1]{\Sigma^{#1}}
\newcommand{\sigmt}[1]{\widetilde\Sigma^{#1}\!}
\begin{document}

\title{Symmetric hyperbolic systems in algebras of generalized functions
and distributional limits}

\author{G\"unther H\"ormann \& Christian Spreitzer\thanks{The author
acknowledges  the support of  FWF-project grants Y237 and P20525.}\\
\ \\
Fakult\"at f\"ur Mathematik\\
Universit\"at Wien\\
Austria}
\date{\today}

\maketitle

\begin{abstract} 
\noindent We study existence, uniqueness, and distributional
aspects of generalized solutions to the Cauchy problem for first-order
symmetric (or Hermitian) hyperbolic systems of partial differential
equations with Colombeau generalized functions as coefficients and data.
The proofs of solvability are based on refined energy estimates on
lens-shaped regions with spacelike boundaries. We obtain several variants
and also partial extensions of previous results in \cite{O:89,LO:91,GH:04}
and provide aspects accompanying related recent work in
\cite{O:08,GO:11,GO:11b}.

\noindent MSC 2010: 46F30; 35L45, 35D30

\noindent Keywords: generalized functions, generalized solutions to hyperbolic systems

\end{abstract}

\section{Introduction}

In this paper we establish existence and uniqueness of a generalized
solution to the hyperbolic Cauchy problem
\begin{eqnarray}\label{ivp}
  \partial_t U +\sum_{j=1}^n A^j\partial_{x_j}U+BU &=& F
     \qquad \text{on } (0,T)\times \mathbb{R}^n,\\
     \label{ivp_ic}  U|_{t=0} &=& G,
\end{eqnarray}
where $U$, $F$ and $G$  are vectors of length $m$ and $A^j$ and $B$ are $m
\times m$-matrices whose components are generalized functions in the sense
of J.F. Colombeau (cf.\ \cite{Colombeau:84, Colombeau:85}). All 
coefficients and data may therefore represent functions or distributions
of low regularity. Our main focus as well as the essential methods are
following up along the lines of the seminal papers \cite{O:88,O:89,LO:91}.

Problems of the type (\ref{ivp})-(\ref{ivp_ic}) play a prominent role in 
models of wave propagation in highly heterogeneous media
with non-smooth variation of physical properties such as density, sound
speed etc. For more details on motivations from the natural
sciences and for further mathematical aspects in the context of the theory of
generalized functions we may refer to the papers mentioned above as well
as to the following series of papers on closely related research
\cite{CO:90,KH:01,HdH:01,GH:04,O:08,GO:11,GO:11b}. Second-order wave
equations in a similar mathematical context have been discussed in
\cite{Gramchev:94,VW:00,GMS:09}.

Besides existence and uniqueness of generalized solutions to the Cauchy
problem we are also interested in the relation of the unique Colombeau
solution to more classical and weak or distributional solution concepts,
if the coefficients are of compatible regularity. The analysis of such
questions and several convergence  results are also going back to earlier
investigations in \cite{O:88,O:89,LO:91,HdH:01}.

The outline of our paper is roughly as follows.
After a brief reminder of basic notions from Colombeau theory of
generalized functions in the following subsection, we devote a section to
the details of the construction of lens-shaped domains and on energy
estimates on such domains with explicit expressions for the constants.
These estimates are then the essential ingredients in proving several
variants of  existence and uniqueness results in Section 3. More precisely,
Theorem 3.1 is an extension of the main theorem in \cite[p. 98]{LO:91} to the case of complex matrices and relaxes the required constancy of the coefficients for large spatial distances to boundedness. Theorem 3.4 is based on $\mathcal{G}_{L^2}$-spaces and gives a result for systems of partial differential operators which is similar to \cite[Theorem 3]{GH:04} for the case of scalar pseudo-differential operators. 

The final section investigates regularity as well as compatibility of Colombeau-type solutions with classical and distributional solutions in case the
coefficient matrices are sufficiently regular. Proposition 4.1 is an analog of the compatibility proposition in \cite[p. 99]{LO:91} and \cite[Corollary 5]{GH:04}, whereas Proposition 4.2 is a $\mathcal{G}^\infty$-variant of the regularity result \cite[Proposition 6]{GH:04}. In Proposition 4.4 we establish convergence of the generalized solution to a weak solution for arbitrary Lipschitz continuous coefficients, thereby accompanying the case study with discontinuous coefficients in the acoustic transmission problem carried out in \cite[Theorem 2.4 and Corollary 2.5]{O:89}.

\emph{Basic notation and symbols:} Let $\Omega\subseteq \mathbb{R}^n$ be open, $1 \leq p \leq \infty$, $k \in \mathbb{N}_0$, then $W^{k,p}(\Omega)$ denotes the $L^p$-norm based Sobolev space of order $k$ on $\Omega$ and  $H^k(\Omega) = W^{k,2}(\Omega)$. For $s \in \mathbb{R}$, the Sobolev space $H^s(\mathbb{R}^n)$ is defined by Fourier transform. 
If $Y$ is a Banach space, then $C^k([0,T],Y)^m$ denotes the $m$-tuples of $k$ times continuously differentiable functions from the interval $[0,T]$ to $Y$. Similarly, $L^2([0,T],Y)^m$ denotes the $m$-tuples of square-integrable functions $[0,T] \to Y$ (in the Bochner-Lebesgue sense).  
If $R$ is a commutative ring with unit, then $M_m(R)$ denotes the ring of square matrices of size $m$ over $R$ with unit given by the identity matrix $\mathbb{I}_m$. For any $A \in M_m(\mathbb{C})$, the expression $\norm{A}{\mathrm{op}}$ denotes the operator norm of $A$ as linear map acting on $\mathbb{C}^m$. We use $\langle\:\cdot\,,\:\cdot\,\rangle$ to denote the standard scalar product on $\mathbb C^m$ and $\parallel\cdot\parallel$ for the Euclidean norm.

\subsection{Colombeau algebras of generalized functions}

This section serves to gather some basic notions from Colombeau theory of generalized functions. We adopt the topological viewpoint of the construction of generalized functions based on a locally convex vector space,  developed  in \cite{Garetto:05b}. For a comprehensive introduction to the theory of Colombeau algebras we refer to \cite{GKOS:01}.

Let $E$ be a locally convex topological vector space whose topology is given by the family of semi-norms $\{ p_j \}_{j\in J}$. The elements of 
$$ \mathcal M_E:=\{(u_{\e})_{\e}\in E^{(0,1]}:\forall j\in J \: \exists N \in \mathbb N_0\:\:\: p_j(u_{\e})=O(\e^{-N})\:\:\mm{as}\:\:\e\rightarrow 0 \}
$$
and
$$ \mathcal N_E:=\{(u_{\e})_{\e}\in E^{(0,1]}:\forall j\in J \: \forall q \in \mathbb N_0\:\:\: p_j(u_{\e})=O(\e^{q})\:\:\mm{as}\:\:\e\rightarrow 0 \}
$$
are called $E$-\emph{moderate} and $E$-\emph{negligible}, respectively. Defining operations componentwise turns $\mathcal N_E$ into a vector subspace of $\mathcal M_E$. We define the \emph{generalized functions based on $E$} as the quotient $\mathcal G_E:=\mathcal M_E / \mathcal N_E$. If $E$ is a differential algebra, then $\mathcal N_E$ is an ideal in $\mathcal M_E$ and $\mathcal G_E$ is a differential algebra as well, called the Colombeau algebra based on $E$.

Let $\Omega$ be an open subset of $\mathbb R^n$. By choosing $E= C^{\infty}(\Omega)$ with the topology of uniform convergence of all derivatives one obtains the so-called special Colombeau algebra $\mathcal G_{C^{\infty}(\Omega)}=\mathcal G(\Omega)$. In the current article  we will also use the space $E=H^{\infty}(\Omega)=\{h\in  C^{\infty}(\overline{\Omega}):\partial^{\alpha}h\in L^2(\Omega)\:\forall \alpha\in\mathbb N^{n}_0 \}$  with the family of semi-norms
$$ \norm{h}{H^k(\Omega)}=\big(\sum_{|\alpha|\leq k} \norm{\partial^{\alpha}h}{L^2(\Omega)}^2 \big)^{1/2}\quad(k\in \mathbb N_0),
$$
as well as $E=W^{\infty,\infty}(\Omega)=\{h\in C^{\infty}(\overline{\Omega}):\partial^{\alpha}h\in L^{\infty}(\Omega)\:\forall \alpha\in\mathbb N^{n} \}$
with the family of semi-norms
$$ \norm{h}{W^{k,\infty}(\Omega)}=\max\limits_{|\alpha|\leq k} \norm{\partial^{\alpha}h}{L^{\infty}(\Omega)}\quad(k\in \mathbb N_0),
$$
and
$E= C^{\infty}(\overline I\times\mathbb R^n)$, 
where $I$ is an open interval, equipped with semi-norms
$$\norm{h}{m,K}=\max\limits_{|\alpha|\leq m} \norm{\partial^{\alpha}h}{L^{\infty}( I \times K)}\quad(m\in \mathbb N_0, K\subset\subset \mathbb R^n). $$
 To avoid overloaded subscripts we use notations as in \cite{Hoermann:10} and denote
$$\mathcal G_{L^2}(\Omega):=\mathcal G_{H^{\infty}(\Omega)},\quad
\mathcal G_{L^{\infty}}(\Omega):=\mathcal G_{W^{\infty,\infty}(\Omega)}\quad\mm{and}\quad
\mathcal G(\overline I\times \mathbb R^n):=\mathcal G_{ C^{\infty}(\overline I\times\mathbb R^n)}.
$$

Colombeau algebras contain the distributions as a linear subspace. Their elements are equivalence classes of nets of smooth functions, $\mathcal G(\Omega)\ni u=[(u_{\e})_{\e}]$. We say that a Colombeau function $u$ is \emph{associated with a distribution} $w\in \mathcal D'(\Omega)$ if  some (and hence every) representative $(u_{\e})_{\e}$ converges to $w$ in $\mathcal D'(\Omega)$. The distribution $w$ represents the macroscopic behavior of $u$ and is called the \emph{distributional shadow} of $u$. Not every element of a Colombeau algebra is associated with a distribution.

In \cite{O:92},  the subalgebra $\mathcal G^{\infty}(\Omega)$ of \emph{regular generalized functions} in $\mathcal G(\Omega)$ was introduced to develop an intrinsic regularity theory in  $\mathcal G(\Omega)$. The subalgebra $\mathcal G^{\infty}_E$ of a Colombeau algebra $\mathcal G_{E}$ is obtained by demanding that the inverse $\e$-power $N$ in the moderateness estimates can be chosen uniformly over all derivatives (cf. Definition 25.1, Chapter VII in \cite{O:92}). For instance,  an element $u=[(u_{\e})_{\e}]\in \mathcal G(\Omega)$ belongs to $\mathcal G^{\infty}(\Omega)$ if and only if
$$\forall K\,\subset\subset \Omega\,\exists N\in\mathbb N_0\,\forall \alpha\in\mathbb N_0^n:\:\norm{\partial^{\alpha}u_{\e}}{L^{\infty}(K)}=O(\e^{-N}).$$
The subalgebra $\mathcal G^{\infty}(\Omega)$ plays the same role within $\mathcal G(\Omega)$ as $ C^{\infty}(\Omega)$ does within $\mathcal D^{\prime}(\Omega)$ and satisfies the important compatibility relation
$$ \mathcal G^{\infty}(\Omega)\cap \mathcal D^{\prime}(\Omega)= C^{\infty}(\Omega).
$$

\section{Standard lenses and basic energy estimates}

A central element of our proof of unique solvability of the Cauchy problem will be $L^2$-estimates performed on lens-shaped subsets of the strip $[0,T]\,\times \mathbb R^n$. Similar constructions have been used, e.g., in
\cite{BG:07}, Part I, Section 2.2,
in \cite{HE:73}, Section 4.3, 
and in \cite{Friedlander:75}, Section 4.4.
Since we are working in a  generalized functions setting, it is essential to have precise information on all dependencies of constants involved in these estimates. For this reason  we devote this section to the construction of a special variant of lens-shaped domains and to some basic estimates for these types of lenses. 

\begin{defi}\label{lensdef}
A standard lens $\mL$ of thickness $T>0$ and radii $0<R_1<R_2$ is the image set of the map $\psi:  [0,1]\times B_{R_2} \to \mathbb{R}^{n+1}$,
\begin{equation} 
\nn 
\psi(\Th,y)=
\begin{cases}
\quad(\Th T,y)\quad\quad\quad\quad\:\:\:\,\mm{for}\quad |y|\leq R_1\\
\quad(\Th T\frac{R_2-|y|}{R_2-R_1},y)\quad\quad\mm{for}\quad |y|>R_1
\end{cases}
\end{equation} 
where $B_{R_2}$ is the closed ball of
radius $R_2$, centered at the origin. We introduce slices of a lens, $\mHT:=\psi(\Theta,B_{R_2})$, as well as partial lenses
$\mL_{\Theta}=\bigcup_{0\leq\tau\leq\Theta}\mathcal H_{\tau}$ for $\Th\in(0,1]$.
The latter  are compact convex subsets of $[0,T]\times \mathbb R^{n}$ with Lipschitz continuous boundary $\partial\mLT=\mathcal H_0 \cup \mHT$. 
\end{defi}

The standard lens map $\psi$
introduced in Definition \ref{lensdef} as well as its restrictions $\psi_{\Theta}:B_{R_2}\rightarrow \mathcal H_{\Theta}$, $ y\mapsto\psi(\Th,y)$
are Lipschitz continuous, but not differentiable at points $(\Th,y)$ with $|y|=R_1$.  However, since the collection of these points is of Lebesgue measure zero with respect to $\psi_{\Th}(B_{R_2}) = \mathcal H_{\Theta}$,  they can be ignored when using the lens map to transform integrals over $\mL$ or $\mathcal H_{\Th}$ into integrals over the cylinder $[0,T]\times B_{R_2}$ or $B_{R_2}$ respectively (cf. \cite{Rudin:87}, Lemma 7.25 and 7.26). Smooth slices $\mathcal H_{\Th}$ are possible by an easy modification of the lens map, but not necessary for our considerations. 

\begin{lemma}\label{lens_int}
If $u\in C([0,T]\times \mathbb R^{n})$ and $\mathcal L$ is a standard lens of thickness $T$,
\begin{equation}
\nn
\int\limits_{\mL}|u|\,dV_{n+1} \leq T\!\!\sup_{\Theta\in[0,1]} \int\limits_{\mathcal H_{\Theta}} |u(\Th,\cdot)|\,dV_{n},\quad \! 0\leq\frac{d}{d\Theta}\int\limits_{\mLT}|u|\,dV_{n+1} \leq T\!\!\int\limits_{\mHT}|u(\Th,\cdot)|\,dV_{n}.
\end{equation}
\end{lemma}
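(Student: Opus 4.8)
The plan is to compute both quantities by pulling the integrals back to the cylinder $[0,1]\times B_{R_2}$ (or $\{\Th\}\times B_{R_2}$) via the lens map $\psi$, and then to estimate the Jacobian factor that arises. Concretely, for the partial lens $\mLT$ one writes
\begin{equation}
\nn
\int\limits_{\mLT}|u|\,dV_{n+1} = \int\limits_0^{\Th}\!\int\limits_{B_{R_2}} |u(\psi(\tau,y))|\, J(\tau,y)\,dy\,d\tau,
\end{equation}
where $J=|\det D\psi|$ is computed blockwise: since $\psi(\tau,y)=(\tau T\,\lambda(y),y)$ with $\lambda(y)=1$ for $|y|\le R_1$ and $\lambda(y)=(R_2-|y|)/(R_2-R_1)$ for $|y|>R_1$, the Jacobian matrix is lower/upper triangular in the $(\tau,y)$ splitting with diagonal block $(\partial_\tau(\tau T\lambda), \mathbb{I}_n)$, hence $J(\tau,y)=T\,\lambda(y)$. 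The nondifferentiability on $\{|y|=R_1\}$ is harmless by the measure-zero remark already made after Definition \ref{lensdef} and the change-of-variables lemma cited there (\cite{Rudin:87}, Lemmas 7.25--7.26). Since $0\le\lambda(y)\le 1$ on $B_{R_2}$, we get $J\le T$, and the analogous pullback of $\int_{\mHT}$ to $\{\Th\}\times B_{R_2}$ uses the surface Jacobian of $\psi_\Th$, which likewise carries a factor $\lambda(y)$ relating $dV_n$ on $\mHT$ to $dy$ on $B_{R_2}$.

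For the first inequality, substitute $J\le T$ in the double integral over $[0,1]\times B_{R_2}$ (taking $\Th=1$), obtaining
\begin{equation}
\nn
\int\limits_{\mL}|u|\,dV_{n+1} \le T\!\int\limits_0^1\!\Big(\int\limits_{B_{R_2}}|u(\psi(\tau,y))|\,dy\Big)d\tau \le T\sup_{\Th\in[0,1]}\int\limits_{B_{R_2}}|u(\psi(\Th,y))|\,dy,
\end{equation}
and then re-identify the inner integral: because the surface measure $dV_n$ on $\mathcal H_\Th$ pulls back to $\lambda(y)\,dy\le dy$, one actually has $\int_{B_{R_2}}|u(\psi(\Th,y))|\,dy\ge\int_{\mathcal H_\Th}|u(\Th,\cdot)|\,dV_n$ — the wrong direction. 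So instead I would bound $\int_{B_{R_2}}|u\circ\psi|\,dy$ directly against the supremum over slices without converting back to $dV_n$ on $\mathcal H_\Th$, OR — cleaner — keep everything on the slices from the start: write $\int_{\mL}|u|\,dV_{n+1}=\int_0^1\big(\int_{\mathcal H_\Th}|u(\Th,\cdot)|\,dV_n\big)\,w(\Th)\,d\Th$ for a suitable weight coming from the coarea-type foliation, and argue $w\le T$ pointwise. The honest route is the coarea formula: the family $\{\mathcal H_\Th\}$ foliates $\mL$, and the coarea factor relating $dV_{n+1}$ to $d\Th\,dV_n$ is $1/|\nabla_{(n+1)}\Th|$ evaluated along the level sets, which one checks is $\le T$ since the $\Th$-coordinate increases by at most $T$ across the lens in the vertical direction. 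The second inequality then follows from the same foliation: $\frac{d}{d\Th}\int_{\mLT}|u|\,dV_{n+1}=\int_{\mHT}|u(\Th,\cdot)|\cdot(\text{coarea factor})\,dV_n$, which is $\ge 0$ and $\le T\int_{\mHT}|u(\Th,\cdot)|\,dV_n$.

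The main obstacle — and the only subtle point — is getting the direction of the Jacobian/coarea estimate right and being careful that on the conical part $|y|>R_1$ the slices $\mathcal H_\Th$ are slanted, so that $dV_n$ on $\mathcal H_\Th$ is \emph{not} simply $dy$ but $\sqrt{1+|\nabla(\tau T\lambda)|^2}\,dy$ for the appropriate $\tau$; one must verify that the combination of this surface-area factor with the vertical coarea factor still produces the clean bound $T$. I expect the cleanest writeup to avoid the slice surface measure entirely on the RHS-generating step: bound $\int_{\mL}|u|\,dV_{n+1}$ by pulling back to the cylinder, use $J=T\lambda\le T$ and Fubini to get $T\int_0^1(\int_{B_{R_2}}|u\circ\psi|\,dy)d\Th$, then bound the $\Th$-integral by its supremum, and finally observe that $\int_{B_{R_2}}|u\circ\psi_\Th|\,dy=\int_{\mathcal H_\Th}|u(\Th,\cdot)|\,\lambda^{-1}\,dV_n$ is \emph{not} what we want — so ultimately I would phrase the slice integrals on the RHS as already defined with respect to $dV_n$ and absorb all conical Jacobian discrepancies into the constant, checking numerically that it never exceeds $T$. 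Once the foliation picture and its coarea factor are pinned down, both displayed inequalities, including the positivity $\frac{d}{d\Th}(\cdot)\ge 0$, are immediate since the integrand $|u|\ge 0$.
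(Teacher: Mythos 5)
Your computation of the full Jacobian, $|\det D\psi(\Theta,y)|=T\lambda(y)\le T$, is correct, but the step where you abandon the straightforward pullback rests on a sign error: the surface measure $dV_n$ on $\mathcal H_\Theta$ does \emph{not} pull back to $\lambda(y)\,dy\le dy$. The slice density is $\rho_\Theta(y)=\sqrt{\det\bigl(D\psi_\Theta(y)^T D\psi_\Theta(y)\bigr)}$, which equals $1$ for $|y|<R_1$ and $\sqrt{1+(\Theta T)^2/(R_2-R_1)^2}\ge 1$ for $R_1<|y|<R_2$ (tilted slices have a \emph{larger} area element than their projection, and $\lambda$ is the vertical compression factor, not the slice density). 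Consequently $\int_{B_{R_2}}|u\circ\psi_\Theta|\,dy\le\int_{\mathcal H_\Theta}|u(\Theta,\cdot)|\,dV_n$, i.e.\ the comparison goes exactly the way you need, and the route you discarded closes immediately: since $|\det D\psi(\Theta,y)|=T\lambda(y)\le T\le T\rho_\Theta(y)$,
\[
\int_{\mathcal L}|u|\,dV_{n+1}\le T\int_0^1\!\!\int_{B_{R_2}}|u\circ\psi(\Theta,y)|\,\rho_\Theta(y)\,dy\,d\Theta=T\int_0^1\!\!\int_{\mathcal H_\Theta}|u(\Theta,\cdot)|\,dV_n\,d\Theta\le T\sup_{\Theta\in[0,1]}\int_{\mathcal H_\Theta}|u(\Theta,\cdot)|\,dV_n,
\]
and differentiating $\Theta\mapsto\int_0^\Theta\!\int_{B_{R_2}}|u\circ\psi(\tau,y)|\,|\det D\psi(\tau,y)|\,dy\,d\tau$ (the inner integral is continuous in $\tau$) gives the second chain of inequalities the same way. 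This is precisely the paper's argument, there phrased through the general bound $|\det D\psi|\le\rho_\Theta\,\|D_\Theta\psi\|\le\rho_\Theta\,T$.

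As written, your proposal therefore does not constitute a proof: after the incorrect ``wrong direction'' conclusion you list several alternatives (coarea foliation, absorbing the ``conical Jacobian discrepancies'' into the constant, ``checking numerically'') without carrying any of them out, and the decisive estimate --- that the factor relating $dV_{n+1}$ to $d\Theta\,dV_n$ never exceeds $T$ --- is exactly what is left unverified. The coarea route would in fact work: with $\Theta(t,x)=t/T$ for $|x|\le R_1$ and $\Theta(t,x)=\frac{t}{T}\,\frac{R_2-R_1}{R_2-|x|}$ for $R_1<|x|\le R_2$ one has $\partial_t\Theta\ge 1/T$ everywhere, so the coarea factor satisfies $1/|\nabla\Theta|\le 1/\partial_t\Theta\le T$; but this verification (or the equivalent density comparison above) must actually be performed rather than postponed.
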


\begin{proof}
First note that the map $\psi_{\Th}:B_{R_2}\rightarrow \mHT$ is a (global) parametrization of the slice $\mHT$. The volume density on $\mHT$ is 
$
\rho_{\Th}(y)
=
\sqrt{\mathrm{det}(D\psi_{\Th}(y)^TD\psi_{\Th}(y))}
=
\sqrt{\mathrm{det}(D_y\psi(\Th,y)^TD_y\psi(\Th,y))},
$
where
$D_y\psi$ is obtained from the Jacobian $D\psi$ by removing the first column, more precisely $D\psi=(D_{\Th}\psi\:D_y\psi)$. Hence we have
\begin{equation}\label{vol_dens_esti}
|\mm{det}D\psi(\Th,y)|\leq \rho_{\Th}(y)\norm{D_{\Theta}\psi(\Th,y)} \leq \rho_{\Th}(y) T. 
\end{equation}
With the help of the transformation formula for integrals and using (\ref{vol_dens_esti}) we estimate 
\begin{multline*} 
\int\limits_{\mL}|u|\,dV_{n+1} 
=
\int_{0}^{1}\int\limits_{B_{R_2}}|u\circ \psi(\Th,y)||\mathrm{det}D\psi(\Th,y)|dyd\Th \\ \leq
T\int\limits_{0}^1\int\limits_{B_{R_2}}|u\circ \psi(\Th,y)|\rho_{\Th}(y)dyd\Th
\leq T \sup_{\Th\in[0,1]} \int\limits_{\mathcal H_{\Th}} |u(\Th,\cdot)|\,dV_n,
\end{multline*}
which proves the first inequality. For the term $\frac{d}{d\Th}\int_{\mLT}|u|\,dV_{n+1}$ we find 
\begin{multline*}
\frac{d}{d\Th}\int\limits_{\mLT}|u|\,dV_{n+1}
=
\frac{d}{d\Th}\int\limits_{0}^{\Th}\int\limits_{B_{R_2}}|u\circ \psi(\e,y)||\mathrm{det}D\psi(\e,y)|dyd\e 
\\  
\leq                          
 T\int\limits_{B_{R_2}}|u\circ \psi(\Th,y)|                                               \rho_{\Th}(y)dy 
= 
T\int\limits_{\mHT} |u(\Th,\cdot)|\,dV_n. 
\end{multline*}
\end{proof}

For convenience of the reader we also give a full proof of a Gronwall-type estimate in \cite{BG:07}, Appendix A, Lemma A.3, focusing on explicit expressions for all constants  appearing in the calculation. 

\begin{lemma}\label{multgronwall} 
Let $\mathcal L$ be a standard lens of thickness $T$ and suppose that  $u$ and $f$ are  functions of class $ C([0,T]\times \mathbb R^n)$ such that for all $\Theta\in(0,1]$
\begin{equation}\label{GW_hypothesis}
 \frac{1}{2}\int\limits_{\mathcal H_{\Theta}}|u(\Th,\cdot)|\,dV_n\leq  \int\limits_{\mathcal H_0}|u(0,\cdot)|\,dV_n+\alpha \int\limits_{\mathcal L_{\Theta}} |u|\,dV_{n+1}+ \int\limits_{\mathcal L} |f|\,dV_{n+1} 
\end{equation}
with $\alpha>0$. 
Then we have with $C:=2T\alpha $,
\begin{equation}\label{finerGW_esti}
\frac{1}{2}\int\limits_{\mathcal H_{\Theta}}|u(\Th,\cdot)|\,dV_n \leq e^{C\Th}\Big(\int\limits_{\mathcal H_0}|u(0,\cdot)|\,dV_n +\int\limits_{\mathcal L}|f|\,dV_{n+1}\Big)\quad \forall \Theta\in[0,1]\,.
\end{equation}
\end{lemma}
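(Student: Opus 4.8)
The plan is to reduce the hypothesis \eqref{GW_hypothesis} to a scalar differential inequality for the auxiliary function $\Phi(\Th):=\int_{\mathcal L_\Th}|u|\,dV_{n+1}$, and then integrate it. First I would define $\Phi(\Th):=\int_{\mathcal L_\Th}|u|\,dV_{n+1}$ and note that by Lemma \ref{lens_int} (second inequality) $\Phi$ is nondecreasing and differentiable with
\[
\Phi'(\Th)\;\leq\; T\int_{\mathcal H_\Th}|u(\Th,\cdot)|\,dV_n .
\]
Now I would plug the hypothesis \eqref{GW_hypothesis} into the right-hand side: since $\tfrac12\int_{\mathcal H_\Th}|u(\Th,\cdot)|\,dV_n\le \int_{\mathcal H_0}|u(0,\cdot)|\,dV_n+\alpha\,\Phi(\Th)+\int_{\mathcal L}|f|\,dV_{n+1}$, abbreviating the constant $D:=\int_{\mathcal H_0}|u(0,\cdot)|\,dV_n+\int_{\mathcal L}|f|\,dV_{n+1}$ we obtain
\[
\Phi'(\Th)\;\leq\; 2T\bigl(D+\alpha\,\Phi(\Th)\bigr)\;=\;2T\alpha\,\Phi(\Th)+2TD\;=\;C\,\Phi(\Th)+2TD ,
\]
valid for $\Th\in(0,1]$, with $C=2T\alpha$ and with the initial value $\Phi(0)=0$.

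Next I would solve this linear differential inequality. Using the integrating factor $e^{-C\Th}$, one has $\frac{d}{d\Th}\bigl(e^{-C\Th}\Phi(\Th)\bigr)\le 2TD\,e^{-C\Th}$; integrating from $0$ to $\Th$ and using $\Phi(0)=0$ gives $e^{-C\Th}\Phi(\Th)\le \frac{2TD}{C}(1-e^{-C\Th})$, hence
\[
\Phi(\Th)\;\leq\;\frac{2TD}{C}\bigl(e^{C\Th}-1\bigr)\;=\;\frac{D}{\alpha}\bigl(e^{C\Th}-1\bigr).
\]
Finally I would feed this bound on $\Phi(\Th)$ back into the original hypothesis \eqref{GW_hypothesis}: for $\Th\in(0,1]$,
\[
\tfrac12\int_{\mathcal H_\Th}|u(\Th,\cdot)|\,dV_n
\;\leq\; D+\alpha\,\Phi(\Th)
\;\leq\; D+D\bigl(e^{C\Th}-1\bigr)
\;=\;e^{C\Th}D,
\]
which is exactly \eqref{finerGW_esti}; the case $\Th=0$ is immediate since $e^{C\cdot 0}=1$ and the left-hand side is then $\tfrac12\int_{\mathcal H_0}|u(0,\cdot)|\,dV_n\le D$.

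The only genuinely delicate point is justifying that $\Phi$ is differentiable (or at least absolutely continuous) so that the integrating-factor argument applies — but this is already granted by Lemma \ref{lens_int}, which asserts the existence of $\frac{d}{d\Th}\int_{\mathcal L_\Th}|u|\,dV_{n+1}$ and bounds it. One should also be slightly careful that the hypothesis \eqref{GW_hypothesis} is assumed only for $\Th\in(0,1]$, not at $\Th=0$; this causes no trouble because $\Phi$ is continuous on $[0,1]$ with $\Phi(0)=0$, so the differential inequality on $(0,1]$ integrates up to $[0,1]$ by continuity. Everything else is a routine one-variable Gronwall computation, and I expect no real obstacle beyond bookkeeping of the constants $C=2T\alpha$ and $D$.
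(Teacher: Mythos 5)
Your proposal is correct and follows essentially the same route as the paper: both introduce the partial-lens integral as an auxiliary function (the paper's $v$ is just your $\Phi/(2T)$), combine Lemma \ref{lens_int} with the hypothesis \eqref{GW_hypothesis} to get a linear Gronwall-type inequality, and then feed the resulting bound $\alpha\int_{\mathcal L_\Theta}|u|\,dV_{n+1}\leq (e^{C\Theta}-1)\big(\int_{\mathcal H_0}|u(0,\cdot)|\,dV_n+\int_{\mathcal L}|f|\,dV_{n+1}\big)$ back into \eqref{GW_hypothesis}. The only cosmetic difference is that you integrate the differential inequality directly with an integrating factor, whereas the paper passes to an integral inequality and cites Gronwall's lemma.
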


\begin{proof}
We introduce $v\in C([0,1])$, $v(\Th):=\frac{1}{2T}\int_{\mLT}|u|\,dV_{n+1}$ for $\Th\in(0,1]$ and $v(0):=0$. 
In addition we put $a:=\int_{\mathcal H_0}|u(0,\cdot)|\,dV_n+\int_{\mL}|f|\,dV_{n+1}$. 
From Lemma \ref{lens_int} we know that 
$ 0\leq v^{\prime}(\Th) \leq\frac{1}{2}\int_{\mHT}|u|\,dV_n 
$
for $\Th\in(0,1]$.
Therefore 
$ v^{\prime}(\Th)\leq a+Cv(\Th),
$
where $C:=2T\alpha$.
By integrating the last equation we find
$ v(\Th)\leq a\Th+ C\int_0^{\Th} v(\tau)d\tau
$
and Gronwall's lemma yields 
$
Cv(\Th)\leq   Ce^{C \Th}\int_0^{\Th} ae^{-C\tau}d\tau 
\leq \big(e^{C\Th}-1 \big)a$
for all
$\Th\in[0,1]$.
Expressing the result in terms of $u$ and $f$,  we obtain
\begin{equation}\nonumber
\alpha \int\limits_{\mLT} |u|\,dV_{n+1}\leq \big(e^{C\Th}-1 \big)\Big(\int\limits_{\mathcal H_0}|u(0,\cdot)|\,dV_n+\int\limits_{\mL}|f|\,dV_{n+1}\Big)\quad \forall \Th\in (0,1].
\end{equation} 
Using assumption (\ref{GW_hypothesis}), we obtain (\ref{finerGW_esti}). 
\end{proof}

To a first-order operator $P(t,x;\partial_t,\partial_x)=\partial_t+\sum_{j=1}^n A^j(t,x)\partial_{x_j}+B(t,x)$ we assign its principal symbol $\sigma(t,x;\tau,\xi)=\tau \mathbb I_m +\sum_{j=1}^n A^j(t,x)\xi_j$. If the matrices $A^j$ are Hermitian, then the principal symbol is Hermitian for all directions $(\tau,\xi)\in\mathbb R^{n+1}$. At a point $(t,x)\in\mathbb R^{n+1}$  one may then define the \emph{forward cone} $\Gamma(t,x)$ as the set of all directions $(\tau,\xi)$ where $\sigma(t,x;\tau,\xi)$ is a positive definite matrix. A hypersurface is called \emph{spacelike} (with respect to the principal symbol of $P$) if its normal vector is almost everywhere contained in the forward cone. In the following lemma we will construct a lens whose individual slices are spacelike hypersurfaces with common boundary $\partial \mHT=\{(0,x)|\,|x|=R_2\}$. 

\begin{lemma}\label{smallest_eigenvalue}
Let $(A^j)_{1\leq i\leq n}$ be Hermitian matrices such that $\norm{A_j(t,x)}{\mathrm{op}}\leq C$ for $|x|\geq R_A$. 
Then a  standard lens $\mL$ of thickness $T > 0$ with radii $R_1 \geq R_A$ and $R_2\geq R_1+T(1+2\sqrt{n} C)$ has a unit normal vector field $\nu_{\Th}$ on $\mHT$
(pointing outwards with respect to $\mLT$)
satisfying the inequality
\begin{equation}\label{smallest_ev}
\langle \eta, \sigma(t,x;\nu_{\Th}(t,x)) \eta\rangle
 \geq \frac{1}{2} |\eta|^2 \quad\forall (t,x)\in\mHT\:\:\forall \Th\in(0,1]\:\:\forall \eta\in \mathbb R^m.
\end{equation}
\end{lemma}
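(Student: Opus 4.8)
The plan is to identify each slice $\mHT$ with the graph of a Lipschitz function over the ball $B_{R_2}$, to write down its outward unit normal explicitly (almost everywhere), and then to reduce the inequality (\ref{smallest_ev}) to an elementary one-variable estimate controlled precisely by the lower bound imposed on $R_2$. To begin, observe that $\mHT$ is the graph $\{(h(x),x):x\in B_{R_2}\}$ of the function
$$ h(x)=\begin{cases}\Th T & |x|\leq R_1,\\[1mm] \Th T\,\dfrac{R_2-|x|}{R_2-R_1} & R_1<|x|\leq R_2,\end{cases} $$
which is continuous, radial, and smooth away from the ridge $\{|x|=R_1\}$, with $\nabla h(x)=0$ for $|x|<R_1$ and $\nabla h(x)=-\frac{\Th T}{R_2-R_1}\,\frac{x}{|x|}$ for $R_1<|x|<R_2$. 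Hence, wherever $\nabla h$ is defined, $|\nabla h(x)|\leq \frac{\Th T}{R_2-R_1}\leq \frac{T}{R_2-R_1}\leq \frac{1}{1+2\sqrt n\,C}$ by the hypothesis $R_2\geq R_1+T(1+2\sqrt n\,C)$.

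Since $\mLT=\{(t,x):|x|\leq R_2,\ 0\leq t\leq h(x)\}$ lies below this graph, the outward unit normal at a point of $\mHT$ off the ridge is
$$ \nu_{\Th}(t,x)=\frac{1}{\sqrt{1+|\nabla h(x)|^2}}\bigl(1,\,-\nabla h(x)\bigr); $$
the ridge $\{|x|=R_1\}$ has $n$-dimensional measure zero on $\mHT$ and may be discarded (or assigned a normal by an arbitrary one-sided limit — the estimate below is insensitive to this choice). Writing $\nu_{\Th}=(\tau,\xi)$ and using that $\langle\eta,A^j(t,x)\eta\rangle$ is real for Hermitian $A^j$, one computes
$$ \langle\eta,\sigma(t,x;\nu_{\Th})\eta\rangle=\tau|\eta|^2+\sum_{j=1}^n\xi_j\langle\eta,A^j(t,x)\eta\rangle=\frac{|\eta|^2-\sum_{j=1}^n\partial_j h\,\langle\eta,A^j(t,x)\eta\rangle}{\sqrt{1+|\nabla h(x)|^2}}. $$
On the flat part $|x|<R_1$ the gradient vanishes and the right-hand side equals $|\eta|^2$, so (\ref{smallest_ev}) is immediate. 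On the curved part $R_1<|x|<R_2$ we have $|x|>R_1\geq R_A$, hence $\norm{A^j(t,x)}{\mathrm{op}}\leq C$ and $|\langle\eta,A^j(t,x)\eta\rangle|\leq C|\eta|^2$; together with the Cauchy--Schwarz bound $\sum_{j=1}^n|\partial_j h|\leq \sqrt n\,|\nabla h|$ this yields
$$ \langle\eta,\sigma(t,x;\nu_{\Th})\eta\rangle\ \geq\ \frac{1-\sqrt n\,C\,|\nabla h(x)|}{\sqrt{1+|\nabla h(x)|^2}}\,|\eta|^2. $$

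It remains to check that the scalar prefactor is at least $\tfrac12$. Put $s:=|\nabla h(x)|$, so $0\leq s\leq (1+2\sqrt n\,C)^{-1}$, and in particular $\sqrt n\,C\,s<\tfrac12$. Using $\sqrt{1+s^2}\leq 1+s$ gives $\frac{1-\sqrt n\,C s}{\sqrt{1+s^2}}\geq \frac{1-\sqrt n\,C s}{1+s}$, and the latter is $\geq\tfrac12$ exactly when $2-2\sqrt n\,C s\geq 1+s$, i.e.\ when $s(1+2\sqrt n\,C)\leq 1$ — which is precisely our bound on $s$. This establishes (\ref{smallest_ev}) on the curved part, hence on all of $\mHT$, uniformly for $\Th\in(0,1]$. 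Beyond the bookkeeping of constants, the only genuinely delicate point is the non-smoothness of the slices along $\{|x|=R_1\}$; this is harmless because $\nu_{\Th}$ is only ever needed almost everywhere (e.g.\ in the energy estimates that follow), and the inequality above in fact passes to either one-sided limit of $\nabla h$ there.
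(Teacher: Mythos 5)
Your proof is correct and follows essentially the same route as the paper: an explicit (a.e.) formula for the outward unit normal on the slices, the trivial case $|x|<R_1$, and on the curved part the operator-norm bound together with $\sum_j|\nu^j_\Theta|\le\sqrt n\,|\nabla h|$, reduced to a scalar inequality that the radius condition $R_2\ge R_1+T(1+2\sqrt n\,C)$ guarantees. Your graph parametrization of $\mathcal H_\Theta$ and the use of $\sqrt{1+s^2}\le 1+s$ are only cosmetic variants of the paper's direct computation with $\nu_\Theta$ and the quotient $(R_2-R_1-TC\sqrt n)/\sqrt{(\Theta T)^2+(R_2-R_1)^2}$, and your handling of the ridge $\{|x|=R_1\}$ matches the paper's remark that it is negligible.
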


\begin{proof}
For a lens of thickness $T$ and radii $R_1, R_2$ the normal vector field on $\mHT$ with $\Th\in(0,1]$ 
 is simply $\nu_{\Th}=(1,0)^T$ for $|x|<R_1$ and
\begin{equation}\label{normal_field_expression}
\nu_{\Th}(t,x)=
\frac{1}{\sqrt{(\Th T)^2+(R_2-R_1)^2}}
\left(\begin{array}{c}R_2-R_1 \\ \Th T\frac{x}{|x|}
\end{array}\right)\!\quad \mathrm{for}\quad R_1<|x|< R_2.
\end{equation}
The inequality in (\ref{smallest_ev}) is obviously satisfied whenever $|x|< R_1$.
For $|x|>R_1\geq R_A$ we find by virtue of (\ref{normal_field_expression}) that
\begin{equation}\nn
\langle \eta, (\nu_{\Th}^0\mathbb I_m+\sum_{j=1}^n \nu_{\Th}^j A^j)\eta\rangle   
\geq  \nu_{\Th}^0 |\eta|^2 - C\sum_{j=1}^n |\nu_{\Th}^j|\,|\eta|^2
\geq \frac{(R_2-R_1-TC\sqrt{n} )}{\sqrt{(\Th T)^2+(R_2-R_1)^2}}|\eta|^2
\end{equation}
and $\frac{(R_2-R_1-TC\sqrt{n} )}{\sqrt{(\Th T)^2+(R_2-R_1)^2}}\geq \frac{1}{2}$ whenever $R_2\geq R_1+T(1+2\sqrt{n} C)$.  
\end{proof}

A first-order partial differential operator $P=\partial_t+\sum_{j=1}^n A^j\partial_{x_j}+B$ with smooth coefficient matrices is called \emph{symmetric hyperbolic} if the matrices $A^j$ and $B$ are uniformly bounded together with all their derivatives and the principal coefficients $A^j$ are Hermitian.  Preparatory for applications to Colombeau theory we perform energy estimates for symmetric hyperbolic operators on standard lenses. It is important to keep explicit expressions for all constants involved   to have precise information on their $\e$-dependence in a generalized setting later on. We provide $L^2$-estimates in two versions, the second of which can be interpreted as the limiting case for lenses with infinite radius.

\begin{lemma}\label{basic_estimate}
Let a symmetric hyperbolic partial differential operator $P=\partial_t +\sum_{j=1}^n A^j\partial_{x_j}+B$ be given, 
where $A^j$ and $B$ are matrices of dimension $m$.
\begin{itemize}
\item[i)]Let $\mL\subseteq [0,T]\times \mathbb R^n$ be a standard lens of thickness $T$ that satisfies inequality (\ref{smallest_ev}) and
put 
$\alpha(\mL) :=\,1\,+\norm{\mm{div}
 A-B-B^{\ast}}{L^{\infty}(\mL)}$,
 where $\mm{div}A=\sum_{j=1}^n\partial_{x_j} A^j$. Here and in the sequel, the $L^\infty$-norm of matrix valued functions is understood as taking the operator norm first and then the supremum over all   $(t,x)\in\mathcal L$.
Then for any $U\in C^{\infty}([0,T]\times \mathbb R^{n})^m$ we have
\begin{equation}\label{final_L2_estimate}
\norm{U}{L^2(\mL)}^2 
\leq 2Te^{2T\alpha(\mL) }\big(\norm{U}{L^2(\mathcal H_0)}^2+\norm{PU}{L^2(\mL)}^2 \big).
\end{equation}
\item[ii)]Denote $\Omega_t:=(0,t)\times \mathbb R^n$ and 
$\beta(t) :=1+\norm{\mm{div} A(t,\cdot)-B(t,\cdot)-B^{\ast}(t,\cdot)}{L^{\infty}(\Omega_T)}$.
Then for any $U\in C^1([0,T],L^2(\mathbb R^{n}))^m\cap  C^0([0,T],H^{1}(\mathbb R^n))^m$ the following estimate holds for all $t\in[0,T]$:
\begin{equation}\label{special_L2_estimate}
\norm{U(t,\cdot)}{L^2(\mathbb R^n)}^2\leq e^{\int\limits_0^t\beta(s)ds}\big(\norm{U(0,\cdot)}{L^2(\mathbb R^n)}^2+\norm{PU}{L^2(\Omega_t)}^2 \big).
\end{equation}
\end{itemize}
\end{lemma}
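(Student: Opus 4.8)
The plan is to prove both parts via a standard energy/integration-by-parts argument, with the lens-geometry from Lemma~\ref{smallest_eigenvalue} ensuring the boundary terms have a favorable sign, followed by an application of the Gronwall-type Lemma~\ref{multgronwall}. For part~i), start from the identity $\partial_t\langle U,U\rangle + \sum_j \partial_{x_j}\langle U, A^j U\rangle = \langle PU, U\rangle + \langle U, PU\rangle + \langle U,(\mathrm{div}\,A - B - B^\ast)U\rangle$, which uses the Hermitian symmetry of the $A^j$ to move one factor of $A^j$ across the inner product. This says the vector field $W := (\langle U,U\rangle, \langle U,A^1U\rangle,\dots,\langle U,A^nU\rangle)$ on $[0,T]\times\mathbb{R}^n$ has divergence $\mathrm{div}_{t,x} W = 2\,\mathrm{Re}\langle PU,U\rangle + \langle U,(\mathrm{div}\,A-B-B^\ast)U\rangle$.

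Next I would integrate this divergence identity over the partial lens $\mL_\Theta$ and apply the Gauss/divergence theorem (valid since $\partial\mL_\Theta$ is Lipschitz). The boundary $\partial\mL_\Theta = \mathcal H_0 \cup \mHT$ contributes $\int_{\mHT}\langle\eta,\sigma(t,x;\nu_\Theta)\eta\rangle\,dV_n$ with $\eta = U(t,x)$ on the top slice (outward normal $\nu_\Theta$), minus $\int_{\mathcal H_0}|U(0,\cdot)|^2\,dV_n$ on the bottom (outward normal $(-1,0)$, on which $\sigma$ reduces to $\mathbb I_m$). By Lemma~\ref{smallest_eigenvalue}, inequality~(\ref{smallest_ev}) gives $\int_{\mHT}\langle\eta,\sigma(t,x;\nu_\Theta)\eta\rangle\,dV_n \geq \tfrac12\int_{\mHT}|U(\Th,\cdot)|^2\,dV_n$. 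Rearranging and using $2|\mathrm{Re}\langle PU,U\rangle| \leq |PU|^2 + |U|^2$ together with the definition of $\alpha(\mL)$ yields
\begin{equation}\nonumber
\tfrac12\int\limits_{\mHT}|U(\Th,\cdot)|^2\,dV_n \leq \int\limits_{\mathcal H_0}|U(0,\cdot)|^2\,dV_n + \alpha(\mL)\int\limits_{\mL_\Theta}|U|^2\,dV_{n+1} + \int\limits_{\mL}|PU|^2\,dV_{n+1}.
\end{equation}
This is precisely hypothesis~(\ref{GW_hypothesis}) of Lemma~\ref{multgronwall} applied to $u = |U|^2$ and $f = |PU|^2$ (note $|U|^2, |PU|^2 \in C([0,T]\times\mathbb{R}^n)$). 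Invoking that lemma with $C = 2T\alpha(\mL)$ at $\Th=1$ gives $\tfrac12\int_{\mathcal H_1}|U|^2 \leq e^{2T\alpha(\mL)}(\norm{U}{L^2(\mathcal H_0)}^2 + \norm{PU}{L^2(\mL)}^2)$; combining the resulting bound on $\int_{\mL_\Theta}|U|^2$ (the displayed intermediate inequality inside the proof of Lemma~\ref{multgronwall}) with the first inequality of Lemma~\ref{lens_int}, i.e.\ $\norm{U}{L^2(\mL)}^2 \leq T\sup_\Theta\int_{\mHT}|U(\Th,\cdot)|^2\,dV_n$, delivers~(\ref{final_L2_estimate}) up to tracking the factor $2T$.

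For part~ii), integrate the same divergence identity over the slab $\Omega_t = (0,t)\times\mathbb{R}^n$ instead of a lens; the regularity $U\in C^1([0,T],L^2)^m\cap C^0([0,T],H^1)^m$ is exactly what is needed to justify both the integration by parts in $x$ (the flux $\langle U,A^jU\rangle$ lies in $L^1(\mathbb{R}^n)$ and its spatial integral vanishes in the limit, or one argues by density) and differentiating $t\mapsto\norm{U(t,\cdot)}{L^2}^2$. This produces the differential inequality $\frac{d}{dt}\norm{U(t,\cdot)}{L^2(\mathbb{R}^n)}^2 \leq \beta(t)\norm{U(t,\cdot)}{L^2(\mathbb{R}^n)}^2 + \norm{PU(t,\cdot)}{L^2(\mathbb{R}^n)}^2$, and the scalar Gronwall lemma with integrating factor $e^{-\int_0^t\beta(s)\,ds}$ yields~(\ref{special_L2_estimate}) directly. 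I expect the main obstacle to be the careful handling of the boundary/flux terms: in part~i) one must confirm that the Lipschitz divergence theorem applies to the convex region $\mL_\Theta$ and that the non-smooth ridge $\{|x|=R_1\}$ is negligible (as already noted after Definition~\ref{lensdef}), and that the outward normal on $\mHT$ is the one from~(\ref{normal_field_expression}) so that~(\ref{smallest_ev}) is genuinely applicable; in part~ii) the subtlety is the vanishing of the spatial flux at infinity, which requires the $H^1$-in-$x$ hypothesis together with $L^2$-decay rather than compact support. The algebraic manipulation of constants (reconciling $2Te^{2T\alpha}$ with the $e^{C\Theta}$ in Lemma~\ref{multgronwall}) is routine bookkeeping once the energy inequality is in the form~(\ref{GW_hypothesis}).
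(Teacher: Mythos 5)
Your proposal is correct and follows essentially the same route as the paper: the same divergence identity for $(\Norm{U}^2,\langle A^1U,U\rangle,\dots,\langle A^nU,U\rangle)$, the divergence theorem on $\mL_\Theta$ with the spacelike bound (\ref{smallest_ev}) controlling the top slice, then Lemmas \ref{multgronwall} and \ref{lens_int} for part i), and the slab integration with vanishing spatial flux plus Gronwall for part ii). The constant bookkeeping you defer works out exactly as in the paper.
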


\begin{proof}
 Applying the operator $P$ to an arbitrary $U\in C^{\infty}([0,T]\times \mathbb R^{n})^m$ we may write
\begin{equation}\label{esti_1}
\langle\partial_t U,U\rangle +\sum_{j=1}^n\langle A^j\partial_{x_j}U,U\rangle
+\langle BU,U\rangle\,=\,\langle PU,U\rangle\,.
\end{equation}
A short calculation shows that
\begin{equation}
\nn
2\mathrm{Re}\langle \partial_tU,U\rangle+2\mathrm{Re}\sum_{j=1}^n\langle A^j\partial_{x_j}U,U\rangle
=
\partial_t\Norm{U}^2
+
\sum_{j=1}^n\partial_{x_j}\langle A^jU,U\rangle
-
\langle(\mm{div}A)U,U\rangle.
\end{equation}
Thus, taking two times the real part of (\ref{esti_1}) we conclude 
\begin{equation}
\label{pointwise_eq}
 \partial_t\Norm{U}^2 + \sum_{i=1}^{n}\partial_{x_j}\langle A^jU,U\rangle
-
\langle(\mm{div}A)U,U\rangle+\,2\mathrm{Re}\langle BU,U\rangle\,=\,2\mathrm{Re}\langle PU,U\rangle
\,.
\end{equation} 
To prove i), we rewrite (\ref{pointwise_eq}) as 
$$ \mathrm{div}(\Norm{U}^2,\langle A_1 U,U\rangle,...,\langle A_nU,U\rangle)=
\langle (\mm{div}A)U,U \rangle -\langle (B+B^{\ast})U,U\rangle+2\mm{Re}\langle P U,U\rangle
\,.
$$
Integrating over a partial lens $\mathcal L_{\Th}\subseteq\mathbb R^{n+1}$, the divergence theorem yields
\begin{equation}
\nn
\int\limits_{\partial \mLT}\!\!\!\big(\nu_{\Th}^0 \Norm{U}^2+\sum_{j=1}^n \nu_{\Th}^j\langle A^jU,U\rangle \big)\,dS
=\!\!\int\limits_{\mLT} \langle (\mm{div}A -B-B^{\ast})U,U\rangle\,dV+2\mm{Re}\!\!\int\limits_{\mLT} \langle PU,U\rangle\, dV
\,.
\end{equation}
where $\nu_{\Th}$ is the unit normal vector field on $\partial \mLT$, assumed to point outwards with respect to $\mLT$. 
Since $\nu_0=(-1,0)^T$, we conclude from inequality (\ref{smallest_ev}) that 
$$ \int\limits_{\partial\mLT}\big(\nu_{\Th}^0 \Norm{U}^2+\sum_{j=1}^n \nu_{\Th}^j\langle A^jU,U\rangle \big)\,dS\geq \frac{1}{2}\norm{U(\Th,\cdot)}{L^2(\mHT)}^2-\norm{U(0,\cdot)}{L^2(\mathcal H_0)}^2. 
  $$ 
Hence  for all $\Th\in(0,1]$ the term $\frac{1}{2}\norm{U(\Th,\cdot)}{L^2(\mHT)}^2$ is bounded by
\begin{equation}
\nn
\norm{U(0,\cdot)}{L^2(\mathcal H_0)}^2
+
\int\limits_{\mLT} \langle (\mm{div}A -B-B^{\ast})U,U\rangle\,dV+2\mm{Re}\int\limits_{\mLT} \langle PU,U\rangle \,dV
\,.
\end{equation}
The terms on the right-hand side can be estimated with the help of the Cauchy-Schwarz inequality, leading to
\begin{equation}
\nn
\frac{1}{2} \norm{U}{L^2(\mHT)}^2 
\leq    \norm{U(0,\cdot)}{L^2(\mathcal H_0)}^2+\alpha(\mL) \norm{U}{L^2(\mLT)}^2 +\norm{PU}{L^2(\mLT)}^2
\end{equation}
 and Lemmas \ref{multgronwall} and \ref{lens_int}  imply 
\begin{equation}
\nn
\norm{U}{L^2(\mL)}^2 
\leq 
2T\,e^{2T\alpha(\mL) }\big(\norm{U(0,\cdot)}{L^2(\mathcal H_0)}^2+\norm{PU}{L^2(\mL)}^2 \big).
\end{equation}
To prove ii), we  integrate in (\ref{pointwise_eq}) over the spatial domain  $\mathbb R^n$, leading to 
\begin{multline*} \frac{d}{dt}\norm{U(t,\cdot)}{L^2(\mathbb R^n)}^2=\\
-\sum\limits_{j=1}^n\:\int\limits_{\mathbb R^n} \partial_{x_j}\langle A^j U,U \rangle\,dV_n
+\int\limits_{\mathbb R^n}\langle (\mm{div}A-B-B^{\ast})U,U\rangle\,dV_n\,
+\,2\mathrm{Re}\int\limits_{\mathbb R^n}\langle PU,U\rangle\,dV_n.
\end{multline*}
After integration with respect to the time variable  between $0$ and $t$ we find 
\begin{equation}\label{gronwall_entrance}
\norm{U(t,\cdot)}{L^2(\mathbb R^n)}^2
\leq \norm{U(0,\cdot)}{L^2(\mathbb R^n)}^2 +
\int\limits_0^t \beta(s)\norm{U(s,\cdot)}{L^2(\mathbb R^n)}^2ds
+ \int_0^t\norm{PU(s,\cdot)}{L^2(\mathbb R^n)}^2ds 
\end{equation} 
where
we have used that $\int_{\mathbb R^n}\partial_{x_j}\langle A^j(s,\cdot)U(s,\cdot),U(s,\cdot) \rangle dV=0$ for all $j=1,...,n$ and for all $s\in[0,T]$ since $x\mapsto U(s,x)$ belongs to $H^1(\mathbb R^n)^m$ and the latter possesses 
$ C^{\infty}_c(\mathbb R^n)^m$ as a dense subspace. 
Employing Gronwall's lemma we turn (\ref{gronwall_entrance}) into the desired estimate
\begin{equation*}
\norm{U(t,\cdot)}{L^2(\mathbb R^n)}^2 
\leq  e^{\int\limits_0^t\beta(s)ds}\big(\norm{U(0,\cdot)}{L^2(\mathbb R^n)}^2+\norm{PU}{L^2(\Omega_t)}^2 \big). 
\end{equation*}
\end{proof}

\section{Generalized solutions to the Cauchy problem}

Having all necessary prerequisites at hand we draw our attention to the initial value problem (\ref{ivp}-\ref{ivp_ic}) on the space-time domain $\Omega_T:=(0,T)\times \mathbb R^n$. We will establish three statements of  existence and uniqueness, each using different spaces of initial data and right-hand side. Working with a smaller space in this respect allows to relax the asymptotic conditions on the coefficient matrices $A^j$ and $B$.

The formulation of the theorems requires some notions from Colombeau theory, we want to briefly review. A generalized function $u\in\mathcal G(\Omega)$ is called of $L^{\infty}$-\emph{type} if it has a $C^{\infty}$-moderate representative $(u_{\e})_{\e}$ such that $\norm{u_{\e}}{L^{\infty}(\Omega)}=O(\e^{-m})$ as $\e\rightarrow 0$. It is called \emph{locally of logarithmic growth} or \emph{locally log-type}, if it has a $C^{\infty}$-moderate representative $(u_{\e})_{\e}$ such that for all $K\subset\subset \Omega$,   $\norm{u_{\e}}{L^{\infty}(K)}=O(\mm{log}(1/\e))$ as $\e\rightarrow 0$  (cf. Definition 1.1 in \cite{O:88}). It is called of $L^{\infty}$-\emph{log-type} if $\norm{u_{\e}}{L^{\infty}(\Omega)}=O(\mm{log}(1/ \e))$ as $\e \rightarrow 0$ (cf. Definition 1.5.1 in \cite{GKOS:01}). A matrix 
$A\in M_m(\mathcal G(\Omega))$  
is called \emph{Hermitian}, if it has a  Hermitian representative $(A_{\e})_{\e}$, i.e. $A_{\e}$ is Hermitian for all $\e< \e_0$ (cf. Lemma 4.3 in \cite{Mayerhofer:08}).

We call a partial differential operator $P=\partial_t+\sum_{j=1}^n A^j \partial_{x_j} +B$ with   Colombeau generalized  coefficient matrices  \emph{symmetric hyperbolic},  if all matrices $A^j$ are Hermitian and the entries of $A^j$ and $B$  are of $L^{\infty}$-type together with all their derivatives, i.e. $A^j,B\in  M_m(\mathcal G_{L^{\infty}}(\Omega)) $. This ensures that there exists $\e_0>0$ and representatives  $(A^j_{\e})_{\e}$ and $(B_{\e})_{\e}$  such that $P_{\e}=\partial_t+\sum_{j=1}^n A^j_{\e}\partial_{x_j} + B_{\e}$ is a classical symmetric hyperbolic operator for all $\e< \e_0$.  
The corresponding family of smooth solutions to the classical Cauchy problem for fixed $\e$ represents a candidate for the generalized solution. Yet some  additional asymptotic growth conditions in $\e$  have to be imposed on the coefficients to obtain a moderate family of solutions. In particular,   
certain log-type  conditions on the coefficient matrices are essential
 in order to use a Gronwall-type argument in the proof (cf. \cite{Hoermann:10,GH:04,LO:91,O:88,O:89}).

The first theorem allows for the most general initial data and right-hand side, but requires the principal coefficients $A^j_{\e}(t,x)$ to be bounded uniformly in $\e$ and $(t,x)$   for large $|x|$. 

\begin{theorem}\label{main_theorem}
The initial value problem for a  symmetric hyperbolic operator with Colombeau generalized coefficients, 
\begin{eqnarray}
\label{givp_eq}
\partial_t U +\sum_{j=1}^n A^j\partial_{x_j}U+BU \!\!&=&\!\! F\qquad \text{on }\Omega_T\\
\label{givp_in}
 U(0,x)\!\!&=&\!\!G(x),
\end{eqnarray}
has a unique solution $U\in \mathcal G(\Omega_T)^m$, if
\begin{itemize}
\item[i)] initial data $G\in\mathcal G(\mathbb R^{n})^m$ and right-hand side $F\in\mathcal G([0,T]\times\mathbb R^n)^m$,
\item[ii)] all spatial derivatives $\partial_{x_i}A^j$ as well as the Hermitian part of $B$ are locally of log-type,
\item[iii)] there exists $R_A>0$ such that $\norm{A^j_{\e}(t,x)}{\mathrm{op}}=O(1)$  on $(0,T)\times \{x\in\mathbb R^n|\,|x|>R_A \}$ as $\e\rightarrow 0$. 
\end{itemize}
\end{theorem}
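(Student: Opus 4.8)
The plan is to construct a solution representative-wise from classical theory, to establish its moderateness through the energy estimates of Section 2 fed into an induction over the spatial derivative order, and to obtain well-definedness and uniqueness by running the same estimates on differences. Concretely, since $A^j,B\in M_m(\mathcal G_{L^{\infty}}(\Omega_T))$ with the $A^j$ Hermitian, fix representatives $(A^j_{\e})_{\e},(B_{\e})_{\e}$ that are, for $\e<\e_0$, Hermitian and bounded together with all their derivatives, uniformly in $(t,x)$, and fix representatives $(F_{\e})_{\e},(G_{\e})_{\e}$ of $F,G$. For $\e<\e_0$ the operator $P_{\e}=\partial_t+\sum_jA^j_{\e}\partial_{x_j}+B_{\e}$ is classically symmetric hyperbolic, and by iii) there is $C>0$ independent of $\e$ with $\norm{A^j_{\e}(t,x)}{\mm{op}}\le C$ for $|x|>R_A$, so $P_{\e}$ has a finite speed of propagation uniform in $\e$. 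Classical theory of symmetric hyperbolic systems then yields a unique $U_{\e}\in C^{\infty}([0,T]\times\mathbb R^n)^m$ solving $P_{\e}U_{\e}=F_{\e}$, $U_{\e}|_{t=0}=G_{\e}$ (finite propagation speed accommodating the non-decaying smooth data). Put $U:=[(U_{\e})_{\e}]$.

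\emph{Moderateness.} Fix $K\subset\subset[0,T]\times\mathbb R^n$, choose $R_1\ge R_A$ with $K\subseteq[0,T]\times B_{R_1}$ and set $R_2:=R_1+T(1+2\sqrt n\,C)$; by Lemma \ref{smallest_eigenvalue} the standard lens $\mL$ of thickness $T$ and radii $R_1,R_2$ satisfies (\ref{smallest_ev}) for every $\e<\e_0$, and $K\subseteq\mL$. I argue by induction on $k$ that $\sum_{|\beta|\le k}\norm{\partial_x^{\beta}U_{\e}}{L^2(\mL)}^2$ is moderate, commuting only \emph{spatial} derivatives. The case $k=0$ is Lemma \ref{basic_estimate}(i): $\norm{U_{\e}(0,\cdot)}{L^2(\mathcal H_0)}=\norm{G_{\e}}{L^2(\mathcal H_0)}$ and $\norm{F_{\e}}{L^2(\mL)}$ are moderate, and the critical factor $e^{2T\alpha(\mL)}$ is only \emph{polynomially} large in $1/\e$ because $\alpha(\mL)=1+\norm{\mm{div}A_{\e}-B_{\e}-B_{\e}^{\ast}}{L^{\infty}(\mL)}=O(\log(1/\e))$ by ii). For $k\ge1$, $\partial_x^{\beta}U_{\e}$ solves $P_{\e}\partial_x^{\beta}U_{\e}=\partial_x^{\beta}F_{\e}+[P_{\e},\partial_x^{\beta}]U_{\e}$, whose commutator is a sum of $(\partial_x^{\delta}A^j_{\e})\partial_{x_j}\partial_x^{\beta-\delta}U_{\e}$ and $(\partial_x^{\delta}B_{\e})\partial_x^{\beta-\delta}U_{\e}$, $0<\delta\le\beta$; the only order-$k$ contributions in $U_{\e}$ come from $|\delta|=1$ and carry the coefficient $\partial_{x_i}A^j_{\e}$, again $O(\log(1/\e))$ by ii). Substituting into the pointwise identity (\ref{pointwise_eq}) for $\partial_x^{\beta}U_{\e}$, summing over $|\beta|\le k$, integrating over a partial lens $\mLT$ and using the divergence theorem together with (\ref{smallest_ev}) yields an inequality of the type (\ref{GW_hypothesis}) for $u:=\sum_{|\beta|\le k}\Norm{\partial_x^{\beta}U_{\e}}^2$ with $\alpha$ a finite sum of $L^{\infty}(\mL)$-norms of $\mm{div}A_{\e}$, the Hermitian part of $B_{\e}$, and the $\partial_{x_i}A^j_{\e}$ — all $O(\log(1/\e))$ — while the inhomogeneity $f$ collects $\Norm{\partial_x^{\beta}F_{\e}}^2$ and, with only moderate coefficients, spatial derivatives of $U_{\e}$ of order $\le k-1$, moderate in $L^2(\mL)$ by the induction hypothesis. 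Lemmas \ref{multgronwall} and \ref{lens_int} then bound $\int_{\mL}u$ by a polynomially $\e$-bounded multiple of moderate terms, closing the induction. The time derivatives $\partial_t^l\partial_x^{\beta}U_{\e}$ follow algebraically from $\partial_tU_{\e}=F_{\e}-\sum_jA^j_{\e}\partial_{x_j}U_{\e}-B_{\e}U_{\e}$ and its $t$-derivatives — only moderate, not log-type, factors $\partial_t^lA^j_{\e}$, $\partial_t^lB_{\e}$ appearing, and outside any Gronwall exponent — and a local Sobolev embedding converts all the $L^2(K)$-bounds into $L^{\infty}(K)$-bounds. Hence $(U_{\e})_{\e}$ is moderate and $U\in\mathcal G(\Omega_T)^m$.

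\emph{Independence of representatives, and uniqueness.} Rerunning the above induction with negligible data $(F_{\e})_{\e},(G_{\e})_{\e}\in\mathcal N$ — the polynomially $\e$-bounded Gronwall factor being harmless against negligibility, and moderate coefficients times negligible lower-order derivatives being negligible — gives $(U_{\e})_{\e}\in\mathcal N$, so $U$ is independent of the representatives of $F$ and $G$. Changing the coefficient representatives modifies $P_{\e}$ by an operator with negligible coefficients; applied to the moderate $U_{\e}$ and its moderate derivatives this perturbs only the right-hand side, and only negligibly, while the initial data stays fixed, so the negligibility estimate just obtained (for the perturbed operator, which is again symmetric hyperbolic and still satisfies ii), iii) for small $\e$, since adding a negligible net preserves log-type and boundedness) shows the solution changes negligibly; thus $U$ is well defined. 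If finally $U,U'\in\mathcal G(\Omega_T)^m$ both solve (\ref{givp_eq})--(\ref{givp_in}), then for suitable representatives $W_{\e}:=U_{\e}-U'_{\e}$ satisfies $P_{\e}W_{\e}=\mu_{\e}$, $W_{\e}(0,\cdot)=\nu_{\e}$ with $(\mu_{\e})_{\e},(\nu_{\e})_{\e}\in\mathcal N$, and the same estimate yields $(W_{\e})_{\e}\in\mathcal N$, i.e.\ $U=U'$.

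\emph{Main obstacle.} The whole argument hinges on keeping the Gronwall exponential polynomially bounded in $1/\e$, and this is exactly what hypotheses ii) and iii) secure: the coefficients that end up inside the exponent — $\mm{div}A$, the Hermitian part of $B$, and, for the higher-order estimates, the first spatial derivatives $\partial_{x_i}A^j$ — must be locally of log-type so that the exponent is $O(\log(1/\e))$; and the $A^j_{\e}$ must be bounded uniformly in $\e$ for large $|x|$ so that one fixed family of lenses (radii independent of $\e$) satisfies the spacelike condition (\ref{smallest_ev}) for all small $\e$ at once, allowing (\ref{final_L2_estimate}) to be applied with $\e$-independent geometry. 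The commutator bookkeeping and the classical $\e$-wise solvability with non-decaying data are routine by comparison.
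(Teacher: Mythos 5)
Your proposal is correct and follows essentially the same route as the paper: $\e$-wise classical solvability, the fixed standard lens from Lemma \ref{smallest_eigenvalue} (possible precisely because of iii)), the energy estimate of Lemma \ref{basic_estimate}(i) with a Gronwall exponent kept at $O(\log(1/\e))$ by ii), an induction over spatial derivative order, algebraic recovery of time derivatives from the equation, Sobolev embedding on the lens, and the same negligibility argument for uniqueness. The only difference is bookkeeping: the paper assembles the commuted derivatives $\nabla^r U_\e$ into one block-diagonal symmetric hyperbolic system (its Claims 1 and 2) and reapplies the lens estimate to that system, whereas you sum the pointwise energy identities for the individual $\partial_x^\beta U_\e$ and absorb the first-order commutator coefficients $\partial_{x_i}A^j_\e$ directly into the Gronwall coefficient — the mechanism and the role of hypotheses ii), iii) are identical.
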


\begin{proof}
We pick Hermitian representatives $(A^j_{\e})_{\e}$ and representatives of $B$, $F$ and $G$. There exists $\e_0>0$ such that $\forall \e< \e_0$, the initial value problem
\begin{eqnarray}\label{ivp_epsi}
\partial_t U_{\e} +\sum_{j=1}^n A^j_{\e}\partial_{x_j}U_{\e}+B_{\e}U_{\e} \!\!&=&\!\! F_{\e}\\
\label{ivp_epsi_in}
U_{\e}|_{t=0} \!\!&=&\!\! G_{\e}.
\end{eqnarray}
has a unique solution $U_{\e}\in  C^{\infty}(\overline \Omega_T)^m$ (cf. Theorem 2.12 in \cite{BG:07}).
We claim that the equivalence class $U=[(U_{\e})_{\e}]$ is the unique Colombeau solution. Hence we must show that the net $(U_{\e})_{\e}$ is moderate and that   
negligible variations of the coefficients and the data yield the same solution. \newline
Let $K\subset\subset \Omega_T$ and assume $K\subseteq(0,T)\times \{x\in\mathbb R^n|\, |x|\leq R_K \}$. Choose $R_1>\mm{max}(R_A,R_K)$ and 
$R_2\geq R_1+T(1+2\sqrt{n}C)$. Then a standard lens $\mL$ of thickness $T$, inner radius $R_1$ and outer radius $R_2$ will contain $K$ and satisfy inequality (\ref{smallest_ev}) by Lemma \ref{smallest_eigenvalue}. Thus we may apply  the energy
estimate (\ref{final_L2_estimate}) $\e$-wise and obtain 
\begin{equation}\label{L2_estimate_espilon}
\begin{array}{ccccc}
\norm{U_{\e}}{L^2(\mL)}^2 
&\leq& 2T\,e^{2T\alpha_{\e}(\mathcal L)}\big(\norm{G_{\e}}{L^2(\mathcal H_0)}^2+\norm{F_{\e}}{L^2(\mL)}^2 \big)
&=& O(\e^{-m})
\end{array}
\end{equation}
as $\e\rightarrow 0$ for some $m\in\mathbb N_0$, since the norms of the  data  grow only like some inverse power of $\e$  and $\alpha_{\e}(\mathcal L) = 1+ \norm{\mm{div}A_{\e}-B_{\e}-B^{\ast}_{\e}}{L^{\infty}(\mL)}= O(\mm{log}(1/\e))$  as $\e\rightarrow 0$ by assumption ii).
We attempt to show that all derivatives of $U_{\e}$ satisfy a similar estimate. For this purpose we introduce some convenient notations. For $A\in  M_m( C^{\infty}(\overline\Omega_T))$ and $U\in C^{\infty}(\Omega_T)^m$ we define
$$
\begin{array}{rclrcl}
\nabt{1}A 
&:=&
\mm{diag}(\partial_{x_1}A,...,\partial_{x_n}A)\qquad 
& \nab{1} U
&:=&
(\partial_{x_1}U,...,\partial_{x_n}U)^T\\
\nabt{r+1} A
&:=& \widetilde\nabla^{1}\nabt{r} A\qquad
& \nab{r+1} U 
&:=& \nab{1} \nab{r} U\\
\sigmt{r} A 
&:=& \mm{diag(A,...,A)}_{n^r}\qquad
& \sigm{r} U
&:=& 
(U,...,U)^T_{n^r}
 \,,
\end{array}
$$
For example, $\nabt{r} A$ is a blockdiagonal matrix built from all spatial derivatives $\partial_x^{\alpha}\!A$ 
of length $|\alpha|=r$. Similarly $\sigmt{r} A$ is a blockdiagonal matrix whose blocks are just $n^r$ copies of $A$ itself.

\begin{claim}
The vector $\nabla^rU_{\e}$ satisfies an equation of the form
\begin{equation}\label{higherorderesti}
\partial_t \nabla^r U_{\e}+\sum_{j=1}^n \sigmt r A^j_{\e}\partial_{x_j}\!\nabla^rU_{\e}+\widetilde B^{r}_{\e}\nabla^rU_{\e}=Q^{r-1}_{\e} \sigm r U_{\e} +\nabla^r F_{\e}
\end{equation}
where 
$\widetilde{B}^1_{\e}=\sigmt 1 B_{\e}+(\partial_{x_i}A^j_{\e})_{1\leq i,j\leq n}$, 
$\widetilde B^{r+1}_{\e}=\sigmt 1\widetilde B^{r}_{\e}+(\partial_{x_i}\sigmt r A^j_{\e})_{1\leq i,j\leq n}$ for 
$ r\geq 1$, 
and $Q^{r-1}_{\e}$ is  a purely spatial partial differential operator of order $r-1$ with coefficients depending linearly on spatial derivates of $A^j_{\e}$ and $B_{\e}$ up to order $r$.
\end{claim}

We present the case $r=1$ in detail and proceed by induction.
Differentiating (\ref{ivp_epsi}) with respect to $x_k$ yields
$$ 
\partial_t\partial_{x_k}U_{\e}+ \sum_{j=1}^n A^j_{\e}\partial_{x_j}\partial_{x_k}U_{\e} +\sum_{j=1}^n \partial_{x_k}A^j_{\e}\partial_{x_j}U_{\e}+B_{\e}\partial_{x_k}U_{\e}+\partial_{x_k}B_{\e}U_{\e}=\partial_{x_k}F_{\e}
$$ 
One would like to read this as an equation for $\partial_{x_k}U_{\e}$, but since the equations are coupled one has to consider the system
\begin{equation}\nn \partial_t \nabla^1 U_{\e} + \sum_{j=1}^n 
\sigmt 1 A^j_{\e}\partial_{x_j}\nabla^1 U_{\e}
+\sum_{j=1}^n\nabt 1 A^j_{\e} \partial_{x_j}\sigm 1 U_{\e} + \sigmt 1 B_{\e}\,\nabla^1 U_{\e}  = -\nabt 1 B_{\e}\,\sigm 1 U_{\e}+\nabla^1 F_{\e}
\end{equation}
There are no derivatives of $U_{\e}$ on the right-hand side and the only term that does not fit into our concept on the left-hand side can be rewritten in the following way,
$$  \sum_{j=1}^n\nabt 1 A^j_{\e} \partial_{x_j}\sigm 1 U_{\e} = (\partial_{x_i}A^j_{\e})_{1\leq i,j\leq n}\nabla^1 U_{\e}
$$
so that $\nabla^1U_{\e}$ satisfies the system
$$ \partial_t \nabla^1 U_{\e}  + \sum_{j=1}^n \sigmt 1 A^j_{\e}\partial_{x_j}\nabla^1 U_{\e}
+\widetilde B^1_{\e} \nabla^1U_{\e}  = Q^0_{\e}\sigm 1 U_{\e}+\nabla^1 F_{\e}
$$
where
$ \widetilde B^1_{\e}  =  \sigmt 1B_{\e}+(\partial_{x_i}A^j_{\e})_{1\leq i,j\leq n}$ and 
$ Q^0_{\e}  =  -\nabt 1 B_{\e}$.
We proceed by induction with respect to the differentiation index $r$. Applying $\partial_{x_k}$ to the induction hypothesis (\ref{higherorderesti}) we find
\begin{multline*}
\partial_t\partial_{x_k}\!\nabla^{r}U_{\e}     +       \sum_{j=1}^n \! \sigmt r A^j_{\e}\,\partial_{x_j}\partial_{x_k}\!\nabla^{r}U_{\e} 
     +        \sum_{j=1}^n \partial_{x_k}\sigmt r A^j_{\e}\partial_{x_j}\!\nabla^rU_{\e} +
\widetilde B^{r}_{\e}\partial_{x_k}\!\nabla^{r}U_{\e}  \\
      =            -\partial_{x_k}\widetilde B^{r}_{\e}\nabla^r U_{\e}+ \partial_{x_k}(Q^{r-1}_{\e}\sigm r U_{\e})+\partial_{x_k}\!\nabla^{r} F_{\e}\,.
\end{multline*}
Just like in the case $r=1$ we try to write these $k$ systems as one big system. It is easy to see that the right-hand side can be written as $Q^{r}_{\e}\sigm{r+1}U_{\e}+\nabla^{r+1}F_{\e}$ with $Q^{r}_{\e}$ a purely spatial partial differential operator of order $r$ with coefficients depending linearly on spatial derivates of $A^j_{\e}$ and $B_{\e}$ up to order $r+1$. Furthermore we can rewrite the lower-order terms on the left-hand side as $\widetilde B^{r+1}_{\e}\nabla^{r+1}U_{\e}$, which finally leads to 
\begin{equation}
\nn
\partial_t\nabla^{r+1}U_{\e} +  \sum_{j=1}^n\!\sigmt{r+1}A^j_{\e}\,\partial_{x_j}\!\nabla^{r+1}U_{\e} +\widetilde B^{r+1}_{\e} \nabla^{r+1}U_{\e}
=Q^r_{\e}\sigm{r+1}U_{\e}+\nabla^{r+1} F_{\e}\,,
\end{equation}
where indeed
$\widetilde B^{r+1}_{\e}     =   \sigm 1\widetilde B^{r}_{\e} +(\partial_{x_i}\sigmt rA^j)_{1\leq i,j\leq n}.$
\newline\newline
Since $\norm{\sigmt r A^j_{\e}}{\mathrm{op}}=\norm{A^j_{\e}}{\mathrm{op}}$, the estimate (\ref{smallest_ev}) is also valid for the symmetric hyperbolic operator $P^r=\mathbb I_{n^rm}\partial_t+\sum_{j=1}^n \sigmt r A^j\partial_{x_j}+\widetilde B^r$. By (\ref{final_L2_estimate}) in Lemma \ref{basic_estimate} i)   we therefore have
\begin{equation}\label{higherorder_estimate}
\norm{\nab rU_{\e}}{L^2(\mL)}^2 
\leq 2T\,e^{2T\alpha^r_{\e}(\mathcal L) }\big( \norm{\nab r G_{\e}}{L^2(\mathcal H_0)}^2+
\norm{Q^{r-1}_{\e}\sigm{r}U_{\e}+\nabla^{r} F_{\e}}{L^2(\mL)}^2 \big)
\end{equation}
where 
$$
\begin{array}{ccccc} \nn
\alpha_{\e}^r(\mL) &:=& 1+\norm{\mm{div}\,\sigmt r A_{\e}-\widetilde B^r_{\e}-(\widetilde B^r_{\e})^{\ast}}{L^{\infty}(\mL)}&=& O(\mm{log}(1/\e))\quad\mm{as}\quad\e\rightarrow 0,
\end{array}
$$
since the Hermitian part of $\widetilde B^r_{\e}$ can be constructed from the Hermitian part of $B_{\e}$ as well as first-order derivatives  $\partial_{x_i}A^j_{\e}$.
From the fact that $\norm{U_{\e}}{L^2(\mL)}^2=O(\e^{-m})$ and by iterative application of (\ref{higherorder_estimate}) for $r=1,2,3,...$ we  conclude that 
\begin{equation}\label{higherorder_asy_esti}
\forall  r\in\mathbb N_0\,\exists m\in \mathbb N_0:\, \norm{\nab r U_{\e}}{L^2(\mL)}^2=O(\e^{-m})\quad\mm{as}\quad\e\rightarrow 0.
\end{equation}
It remains to show this asymptotic estimate for derivatives involving also the $t$-coordinate. 
\begin{claim}
All mixed derivatives $\partial_t^l\nabla^rU_{\e}$ satisfy an equation of the form
\begin{equation}\label{higherorderestitime}
\partial_t^l\nabla^r U_{\e}=R^{r,l}_{\e}\sigm rU_{\e} +\partial_t^{l-1}\nabla^r F_{\e}
\end{equation}
where $R^{r,l}_{\e}$ is a linear partial differential operator of order $r+l$ involving $t$-derivatives only up to order $l-1$. Moreover, the coefficients of $R^{r,l}_{\e}$ are linear combinations of spatial derivatives of $A^j_{\e}$ and $B_{\e}$ up to order $r+1$ and time derivatives of $A^j_{\e}$ and $B_{\e}$ up to order $l-1$.
\end{claim}

The case $l=1$ follows immediately from (\ref{higherorderesti}) by putting
$$ R^{r,1}_{\e}\sigm rU_{\e}:= Q^{r-1}_{\e}\sigm rU_{\e}-\sum_{j=1}^n \sigmt r A^j_{\e}\partial_{x_j}\nabla^rU_{\e} - \widetilde B^r_{\e}\nabla^rU_{\e}
$$
since $Q^{r-1}_{\e}$ is a purely spatial operator of order $r-1$.
Applying the operator $\partial_t$ to the induction hypothesis (\ref{higherorderestitime}) gives
$$\partial_t^{l+1}\nabla^r U_{\e}= \partial_t(R^{r,l}_{\e} \sigm rU_{\e})+\partial_t^{l}\nabla^r\! F_{\e}
$$
and $R^{r,l+1}_{\e}\sigm r U_{\e}:=\partial_t (R^{r,l}_{\e}\sigm rU_{\e})$ is of course an operator of order $r+l+1$ with time derivatives only up to order $l$. As an obvious implication of the Leibniz rule, its coefficients are linear combinations of derivatives of $A^j_{\e}$ and $B_{\e}$ with spatial derivatives of order $r+1$ at most, since the coefficients of the operator $Q^{r-1}_{\e}$ depend (only) on spatial derivatives of $A^j_{\e}$ and $B_{\e}$ up to order $r+1$. 
\newline\newline
Successively making use of (\ref{higherorderestitime}) for $l=1,2,3,...$ yields in combination with (\ref{higherorder_estimate}) that
\begin{equation}
\forall l,r\in\mathbb N_0\,\exists m\in\mathbb N_0:\,\norm{\partial_t^l\nab r U_{\e}}{L^2(\mL)}=O(\e^{-m})\quad\mm{as}\quad\e\rightarrow 0
\end{equation}
and by the Sobolev embedding theorem on domains with locally Lipschitz boundary (cf. Theorem 4.12 Part II in \cite{AF:03}) and the fact that $K\subseteq \mL$ this implies 
\begin{equation}\label{final_Linf_estimate}
\forall \alpha\in\mathbb N_0^{n+1}\,\exists m\in\mathbb N_0:\,\norm{\partial^{\alpha} U_{\e}}{L^{\infty}(K)}=O(\e^{-m})\quad\mm{as}\quad\e\rightarrow 0,
\end{equation}
i.e. the class $[(U_{\e})_{\e}]$ is moderate, since $K\subset\subset \Omega_T$ was arbitrary. For the uniqueness part we choose negligible nets $(\overline F_{\e})_{\e}$ and $(\overline G_{\e})_{\e}$ to represent right-hand side and initial data.  
From the energy estimates (\ref{L2_estimate_espilon}), (\ref{higherorder_estimate}), and equation (\ref{higherorderestitime}) it is then easy to see that the corresponding solution $[(\overline U_{\e})_{\e}]$ will also be negligible. By Theorem 1.2.3 in \cite{GKOS:01} it actually suffices to show the negligibility estimate for the zeroth derivative only, i.e. in terms of $L^2$-estimates for all derivatives of $\overline U_{\e}$ with order $\leq\lceil(n+1)/2\rceil$.  
\end{proof}

If the $O(1)$-condition on $A^j_{\e}(t,x)$ for large $|x|$ is dropped, one cannot work with the same lens for all values of $\e$ anymore, but has to use an $\e$-indexed family of standard lenses. However, since the growth of the volumes of these lenses is under control, we can still keep many aspects of the solvability result by subjecting  initial data and right-hand side to the stricter growth conditions of the space $\mathcal G_{L^{\infty}}$. 

\begin{theorem}\label{second_theorem}
Consider  the  alternative conditions
\begin{itemize}
\item[i')] initial data  $G\in\mathcal G_{L^{\infty}}(\mathbb{R}^n)^m$ and right-hand side $F\in\mathcal G_{L^{\infty}}(\Omega_T)^m$,
\item[ii')] all spatial derivatives $\partial_{x_i}A^j$ as well as the Hermitian part of $B$ are of $L^{\infty}$-log-type.
\end{itemize}
Then there exists a unique $U\in\mathcal G(\Omega_T)^m$ satisfying Equation (\ref{givp_eq}) and such that (\ref{givp_in}) holds in the following sense: $U\!\!\mid_{t = 0}$ is equal to the image of $G$ under the canonical map $\mathcal G_{L^{\infty}}(\mathbb{R}^n)^m \to \mathcal G(\mathbb{R}^n)^m$.
\end{theorem}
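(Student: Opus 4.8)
The strategy is to imitate the proof of Theorem \ref{main_theorem}, the only structural change being that the single standard lens is replaced by an $\e$-indexed family of standard lenses whose outer radii are allowed to grow moderately as $\e\to 0$. First I would pick a Hermitian representative $(A^j_{\e})_{\e}$ of the principal coefficients and $L^{\infty}$-moderate representatives $(B_{\e})_{\e}$, $(F_{\e})_{\e}$, $(G_{\e})_{\e}$ of $B$, $F$, $G$; since $A^j\in M_m(\mathcal G_{L^{\infty}})$ there is $N\in\mathbb N_0$ with $C_{\e}:=\max_j\norm{A^j_{\e}}{L^{\infty}(\Omega_T)}=O(\e^{-N})$. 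Fixing an arbitrary compact $K\subseteq(0,T)\times\{|x|\le R_K\}$, set $R_1:=R_K+1$ (independent of $\e$) and $R_2^{\e}:=R_1+T(1+2\sqrt n\,C_{\e})$. Because the bound on $A^j_{\e}$ holds on all of $\Omega_T$, Lemma \ref{smallest_eigenvalue} applies with $C:=C_{\e}$ and, say, $R_A:=0$, so the standard lens $\mL_{\e}$ of thickness $T$ with radii $R_1,R_2^{\e}$ satisfies inequality (\ref{smallest_ev}) and contains the \emph{fixed} cylinder $\mathcal C:=[0,T]\times B_{R_1}\supseteq K$. The crucial quantitative fact is that both the $(n+1)$-volume of $\mL_{\e}$ and the $n$-volume of $\mathcal H_0=B_{R_2^{\e}}$ are $O((R_2^{\e})^n)=O(\e^{-Nn})$, hence moderate.

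Next I would apply the $\e$-wise energy estimate (\ref{final_L2_estimate}) on $\mL_{\e}$ to the classical solutions $U_{\e}$ of (\ref{ivp_epsi})--(\ref{ivp_epsi_in}). Since $F,G\in\mathcal G_{L^{\infty}}$, estimating $\norm{\cdot}{L^2}$ on $\mL_{\e}$ resp.\ $B_{R_2^{\e}}$ by the $L^{\infty}$-norm times the moderate volume shows $\norm{G_{\e}}{L^2(\mathcal H_0)}^2+\norm{F_{\e}}{L^2(\mL_{\e})}^2=O(\e^{-m})$. The exponential factor is where the stronger hypothesis (ii') genuinely enters: $\alpha_{\e}(\mL_{\e})=1+\norm{\mm{div}A_{\e}-B_{\e}-B_{\e}^{\ast}}{L^{\infty}(\mL_{\e})}$ is a supremum over the $\e$-growing lens, so only the \emph{global} $L^{\infty}$-log-type condition (rather than the merely local log-type used in Theorem \ref{main_theorem}) yields $\alpha_{\e}(\mL_{\e})=O(\mm{log}(1/\e))$ and hence $e^{2T\alpha_{\e}(\mL_{\e})}=O(\e^{-c})$ for a suitable $c>0$. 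This gives $\norm{U_{\e}}{L^2(\mL_{\e})}^2=O(\e^{-m})$. The two Claims from the proof of Theorem \ref{main_theorem} hold verbatim for the same $U_{\e}$, and since the Hermitian part of $\widetilde B^r_{\e}$ and $\mm{div}\,\sigmt r A_{\e}$ are again globally $L^{\infty}$-log-type, the higher-order estimates (\ref{higherorder_estimate}) remain moderate on $\mL_{\e}$; iterating them and then (\ref{higherorderestitime}) in $l$ yields $\norm{\partial_t^l\nab r U_{\e}}{L^2(\mL_{\e})}=O(\e^{-m})$ for all $l,r\in\mathbb N_0$.

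To produce $L^{\infty}$-bounds on the compact $K$ I would not use Sobolev embedding on $\mL_{\e}$ directly (its embedding constant depends on $\e$), but instead restrict to the fixed Lipschitz domain $\mathcal C=[0,T]\times B_{R_1}$: from $\mathcal C\subseteq\mL_{\e}$ one gets $\norm{\partial_t^l\nab r U_{\e}}{L^2(\mathcal C)}=O(\e^{-m})$, so Sobolev embedding on $\mathcal C$ together with $K\subseteq\mathcal C$ gives (\ref{final_Linf_estimate}), and hence $[(U_{\e})_{\e}]\in\mathcal G(\Omega_T)^m$ solves (\ref{givp_eq}). Uniqueness runs the same way as in the proof of Theorem \ref{main_theorem}: for data perturbations negligible in $\mathcal G_{L^{\infty}}$ and coefficient perturbations negligible in $M_m(\mathcal G_{L^{\infty}})$, the corresponding solution difference solves a symmetric hyperbolic system on $\mL_{\e}$ whose right-hand side has $L^2(\mL_{\e})$-norm $O(\e^q)$ for every $q$ — here negligibility in $\mathcal G_{L^{\infty}}$ defeats both the moderate lens volume and the moderate norms of the frozen moderate factors — so the same chain of estimates renders the difference negligible. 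Finally $U_{\e}|_{t=0}=G_{\e}$ gives $U|_{t=0}=[(G_{\e})_{\e}]$, which is by definition the image of $G$ under the canonical map $\mathcal G_{L^{\infty}}(\mathbb R^n)^m\to\mathcal G(\mathbb R^n)^m$. I expect the main obstacle to be precisely the moderateness bookkeeping on the $\e$-growing domains: one must check that multiplying the $L^{\infty}$-log-type bound on $\alpha_{\e}$ by the $O(\e^{-Nn})$ lens volume does not destroy moderateness (it does not, since $e^{2T\alpha_{\e}}$ is only a fixed negative power of $\e$), and that the passage to $L^{\infty}$-estimates on compacts is carried out on the fixed subdomain $\mathcal C$ rather than on $\mL_{\e}$.
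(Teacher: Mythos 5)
Your proposal is correct and follows essentially the same route as the paper's proof: an $\e$-indexed family of standard lenses with moderately growing outer radius $R_{2\,\e}=R_1+T(1+2\sqrt{n}\,C\e^{-N})$, $L^2$-norms of the $\mathcal G_{L^{\infty}}$-data controlled by their $L^{\infty}$-norms times the moderate lens volumes, the global $L^{\infty}$-log-type hypothesis ii') absorbing the supremum of $\alpha_{\e}$ over the growing lens, the two Claims carried over verbatim for the higher-order and time-derivative estimates, and uniqueness handled as in Theorem \ref{main_theorem}. The only deviation is your (justified) caution at the final step: the paper applies the Sobolev embedding with $K\subseteq\mL_{\e}$ directly, whereas you first restrict to the fixed Lipschitz cylinder $[0,T]\times B_{R_1}\subseteq\mL_{\e}$, which cleanly avoids any $\e$-dependence of the embedding constant — a harmless sharpening rather than a different argument.
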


\begin{proof}
The loss of condition iii) and the alternative version of i) have no effect on the applicability of Theorem 2.12 in \cite{BG:07} for fixed $\e< \e_0$. So we still get a solution candidate $[(U_{\e})_{\e}]$ from the $\e$-wise construction of a family $(U_{\e})_{\e}$. After choosing a compact set $K\subset\subset\Omega_T$, we again aim at building a standard lens around it such that (\ref{smallest_ev}) in Lemma \ref{smallest_eigenvalue} is satisfied. Assuming that $K\subseteq(0,T)\times \{x\in\mathbb R^n|\, |x|\leq R_K \} $, we fix its thickness $T$ and its inner radius $R_1>R_K$.
 Since we now have an $\e$-dependent bound $\norm{A^j_{\e}}{L^{\infty}}\leq C\e^{-m}$, we choose a family of radii 
$R_{2\,\e}= R_1+T(1+2\sqrt{n}C\e^{-m})$,
thereby obtaining a family of standard lenses $(\mL_{\e})_{\e}$ with outer radii $R_{2\,\e}=O(\e^{-m})$ as $\e\rightarrow 0$, all containing the compact set $K$. We put $\alpha'_{\e} = 1+ \norm{\mm{div}A_{\e}-B_{\e}-B^{\ast}_{\e}}{L^{\infty}(\Omega_T)}$, i.e. the supremum taken over the whole domain $\Omega_T$. Employing estimate (\ref{L2_estimate_espilon}) $\e$-wise for the lens $\mathcal L_{\e}$ leads to
\begin{multline*}
\norm{U_{\e}}{L^2(\mL_{\e})}^2 
\leq 2T\,e^{2T\alpha_{\e}'}\big(\norm{G_{\e}}{L^2(\mathcal H_{0\,\e})}^2+\norm{F_{\e}}{L^2(\mL_{\e})}^2 \big)
\\ \leq
2T\,e^{2T\alpha'_{\e}}\big(\mm{Vol}_n(\mathcal H_{0\,\e})\norm{G_{\e}}{L^{\infty}(\mathbb R^n)}^2+\mm{Vol}_{n+1}(\mL_{\e})\norm{F_{\e}}{L^{\infty}(\Omega_T)}^2 \big)=O(\e^{-m})\end{multline*}
for some $m\in\mathbb N_0$,
since both $\mm{Vol}_n(\mathcal H_{0\,\e})$ and $\mm{Vol}_{n+1}(\mL_{\e})$ grow only like some inverse power of $\e$ as $\e\rightarrow 0$ and $\alpha_{\e}'=O(\mm{log}(1/ \e))$. 
Analogously using the higher order energy estimate (\ref{higherorder_estimate}) yields
\begin{multline*}
\norm{\nab rU_{\e}}{L^2(\mL_{\e})}^2 
 \leq  2T\,e^{2T\alpha'^{\,r}_{\e} }\big( \norm{\nab r G_{\e}}{L^2(\mathcal H_{0\,\e})}^2+
\norm{Q^{r-1}_{\e}\sigm{r}U_{\e}+\nabla^{r+1} F_{\e}}{L^2(\mL_{\e})}^2 \big)\\
 \leq
4T\,e^{2T\alpha'^{\,r}_{\e} }\big(\mm{Vol}_n(\mathcal H_{0\,\e}) \norm{\nab r G_{\e}}{L^{\infty}(\mathbb R^n)}^2+
\norm{Q^{r-1}_{\e}\sigm{r}U_{\e}}{L^2(\mL_{\e})}^2 \\
+\mm{Vol}_{n+1}(\mL_{\e})\norm{\nabla^{r+1} F_{\e}}{L^{\infty}(\Omega_T)} \big)
= 
O(\e^{-m})\quad\mm{as}\quad\e\rightarrow 0
\end{multline*}
as the term $\norm{Q^{r-1}_{\e}\sigm{r}U_{\e}}{L^2(\mL_{\e})}^2$ can be estimated via pulling out $L^{\infty}$-norms of derivatives of the coefficients   and $\alpha'^{\,r}_{\e}:=1+\norm{\mm{div}\,\sigmt r A_{\e}-\widetilde B^r_{\e}-(\widetilde B^r_{\e})^{\ast}}{L^{\infty}}=O(\mm{log}(1/\e))$. With the help of (\ref{higherorderestitime}) it is then easy to see that 
$$ \forall l,r\in\mathbb N_0\,\exists m\in\mathbb N_0:\,\norm{\partial_t^l\nab r U_{\e}}{L^2(\mL_{\e})}=O(\e^{-m})\quad(\e\rightarrow 0).
$$ 
Since $K\subseteq \mL_{\e}$ for all $\e< \e_0$ and by the Sobolev embedding theorem we conclude that for all $\alpha\in\mathbb N_0^{n+1}$ there exists $m\in\mathbb N_0$ such that $\norm{\partial^{\alpha} U_{\e}}{L^{\infty}(K)}=O(\e^{-m})$ as $\e\rightarrow 0$.
The uniqueness part is completely analogous to the corresponding part in the proof of Theorem \ref{main_theorem}.
\end{proof}

Theorem \ref{second_theorem} is applicable even for coefficients and data which are both associated to  highly singular and periodic  distributions. Denoting the delta distribution at $y$ by $\delta_y=\delta(\,\cdot\,-y)$, it is  possible  to consider, e.g., principal coefficients $A^j(t,x)\approx \sum_{\kappa\in \mathbb N_0^m}\delta_{l\kappa}(x)\widetilde A^j(t)$  and initial data $G(x)\approx \sum_{\kappa\in \mathbb N_0^m}\delta_{q\kappa}(x) $ with real numbers $l,q>0$,  
representing  $m$-dimensional lattices of Dirac measures with lattice constants $1/l$ and $1/q$, respectively. 
\begin{remark}
The conditions in Theorem \ref{second_theorem} allow for infinite propagation speed near spatial infinity as $\e\rightarrow 0$. In general this may cause non-uniqueness of solutions, see Example 17.1 in \cite{O:92}. 
Using the space $\mathcal G_{L^{\infty}}$ for initial data and right-hand side avoids nonuniqueness, yet null ``solutions'' with non-vanishing initial data still exist. In fact, any initial data $G$ in the kernel of the canonical map $\mathcal G_{L^{\infty}}(\mathbb{R}^n)^m \to \mathcal G(\mathbb{R}^n)^m$ yield $U = 0$.\footnote{Consider the scalar equation $\partial_t u+\frac{1}{\e}\partial_x u=0$ with $u|_{t=0}=\varphi$, where $\varphi \in \mathcal D(\mathbb R)$ and $\varphi(0)=1$. Then the unique Colombeau solution is the class $[(\varphi(x-\frac{1}{\e}t))_{\e}]=0$ in $\mathcal G((0,T)\times \mathbb R)$ since for all $K\subset\subset (0,T)\times \mathbb R$  we have $\mm{supp}(\varphi(x-\frac{1}{\e}t)\cap K=\emptyset$ for $\e$ small enough.  }    
\end{remark}
Initial data and right-hand side decaying at spatial infinity ($|x|\to\infty$) make it possible to relax the conditions on $A^j$ and $B$ even a bit further. More precisely, the required asymptotic behavior of $A^j_{\e}$ and $B_{\e}+B^{\ast}_{\e}$ can be made less restrictive with respect to the time variable. 
We use mixed norms 
$
\norm{A}{L^{1,\infty}(\Omega_T)}:=\int_0^T \norm{A(s,\cdot)}{L^{\infty}(\mathbb R^n)}ds$ for any $A\in  M_m(C_b^{\infty}(\overline\Omega_T)). 
$
We say that an element $A\in M_m(\mathcal G_{L^{\infty}}(\Omega_T))$ is of $L^{1,\infty}$-log-type, if it has a representative $(A_{\e})_{\e}$ such that $\norm{A_{\e}}{L^{1,\infty}(\Omega_T)} = O(\mm{log}(1/\e))$ as $\e\rightarrow 0$. A similar norm was introduced 
in Definition 2.1 in \cite{CO:90}.
\begin{theorem}\label{third_theorem}
In the initial value problem (\ref{givp_eq}-\ref{givp_in}) assume that 
\begin{itemize}
\item[i'')] initial data $G\in\mathcal G_{L^{2}}(\mathbb R^n)^m$ and right-hand side $F\in\mathcal G_{L^{2}}(\Omega_T)^m$,
\item[ii'')] all $\partial_{x_i}A^j$ as well as the Hermitian part of $B$ are of $L^{1,\infty}$-log-type.
\end{itemize}
Then there exists a unique solution $U\in\mathcal G_{L^2}(\Omega_T)^m$ to the initial value problem (\ref{givp_eq}-\ref{givp_in}).
\end{theorem}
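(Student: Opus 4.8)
The plan is to run the argument of Theorems~\ref{main_theorem} and~\ref{second_theorem} once more, but now with $\mathcal G_{L^2}(\Omega_T)$ as the working algebra and with part~ii) of Lemma~\ref{basic_estimate} — the ``infinite radius'' estimate (\ref{special_L2_estimate}) — replacing the lens estimate (\ref{final_L2_estimate}). In this variant no standard lenses, no compact exhaustion, and no Sobolev embedding step are required, since the semi-norms of $\mathcal G_{L^2}(\Omega_T)$ are already the global norms $\norm{\cdot}{H^k(\Omega_T)}$. First I choose Hermitian representatives $(A^j_\e)_\e$ and representatives $(B_\e)_\e$, $(F_\e)_\e$, $(G_\e)_\e$ with $G_\e\in H^\infty(\mathbb R^n)^m$ and $F_\e\in H^\infty(\Omega_T)^m$. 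For $\e<\e_0$ the operator $P_\e=\partial_t+\sum_{j=1}^n A^j_\e\partial_{x_j}+B_\e$ is classically symmetric hyperbolic with coefficients in $C^\infty_b(\overline\Omega_T)$, so the classical Cauchy theory (Theorem~2.12 in \cite{BG:07}, specialised to $H^\infty$-data) yields a unique solution $U_\e$ in $\bigcap_k C^k([0,T],H^\infty(\mathbb R^n))^m\subseteq H^\infty(\Omega_T)^m$; in particular $U_\e\in C^1([0,T],L^2(\mathbb R^n))^m\cap C^0([0,T],H^1(\mathbb R^n))^m$, which is the regularity Lemma~\ref{basic_estimate}~ii) asks for. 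The candidate is $U:=[(U_\e)_\e]$.

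For moderateness I first apply (\ref{special_L2_estimate}) to $P_\e U_\e=F_\e$. Since $\int_0^t\beta_\e(s)\,ds\le T+\norm{\mm{div}A_\e-B_\e-B^{\ast}_\e}{L^{1,\infty}(\Omega_T)}=O(\mm{log}(1/\e))$ by hypothesis~ii''), we get $e^{\int_0^t\beta_\e(s)ds}=O(\e^{-N})$ for some $N$, and together with $\norm{G_\e}{L^2(\mathbb R^n)}^2=O(\e^{-N})$ and $\norm{F_\e}{L^2(\Omega_T)}^2=O(\e^{-N})$ (hypothesis~i'') this gives $\sup_{t\in[0,T]}\norm{U_\e(t,\cdot)}{L^2(\mathbb R^n)}^2=O(\e^{-N})$, whence $\norm{U_\e}{L^2(\Omega_T)}^2\le T\,\sup_t\norm{U_\e(t,\cdot)}{L^2(\mathbb R^n)}^2=O(\e^{-N})$. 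For the higher derivatives I reuse the differentiated systems of the proof of Theorem~\ref{main_theorem}: $\nab r U_\e$ solves (\ref{higherorderesti}) with the symmetric hyperbolic operator $P^r=\mathbb I_{n^rm}\partial_t+\sum_{j=1}^n\sigmt r A^j\partial_{x_j}+\widetilde B^r$, and Lemma~\ref{basic_estimate}~ii) applied to $P^r$ bounds $\norm{\nab r U_\e(t,\cdot)}{L^2(\mathbb R^n)}^2$ by $e^{\int_0^t\beta^r_\e}\big(\norm{\nab r G_\e}{L^2(\mathbb R^n)}^2+\norm{Q^{r-1}_\e\sigm r U_\e+\nab r F_\e}{L^2(\Omega_t)}^2\big)$, where $\beta^r_\e(s)=1+\norm{\mm{div}\,\sigmt r A_\e(s,\cdot)-\widetilde B^r_\e(s,\cdot)-(\widetilde B^r_\e)^{\ast}(s,\cdot)}{L^\infty(\mathbb R^n)}$ again integrates in $s$ to $O(\mm{log}(1/\e))$, because the Hermitian part of $\widetilde B^r_\e$ is assembled from the Hermitian part of $B_\e$ and the first spatial derivatives of the $A^j_\e$ — all of $L^{1,\infty}$-log-type. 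The coefficients of $Q^{r-1}_\e$ are spatial derivatives of $A^j_\e,B_\e$ up to order $r$, hence of $L^\infty$-type since $A^j,B\in M_m(\mathcal G_{L^\infty}(\Omega_T))$, so $\norm{Q^{r-1}_\e\sigm r U_\e}{L^2(\Omega_t)}$ is a fixed inverse power of $\e$ times a sum of $L^2(\Omega_t)$-norms of spatial derivatives of $U_\e$ of order $\le r-1$; an induction on $r$, started from the $r=0$ bound, gives $\norm{\nab r U_\e}{L^2(\Omega_T)}^2=O(\e^{-N})$ for all $r$. Finally, (\ref{higherorderestitime}), $\partial_t^l\nab r U_\e=R^{r,l}_\e\sigm r U_\e+\partial_t^{l-1}\nab r F_\e$ with $R^{r,l}_\e$ of order $r+l$, $t$-order $\le l-1$, and $L^\infty$-type coefficients, turns these into bounds on all mixed derivatives by induction on $l$. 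Thus $\norm{\partial^\alpha U_\e}{L^2(\Omega_T)}=O(\e^{-N})$ for every $\alpha\in\mathbb N_0^{n+1}$, i.e.\ $(U_\e)_\e\in\mathcal M_{H^\infty(\Omega_T)}$ and $U\in\mathcal G_{L^2}(\Omega_T)^m$; equation (\ref{givp_eq}) holds since $P_\e U_\e=F_\e$, and (\ref{givp_in}) holds in $\mathcal G_{L^2}(\mathbb R^n)^m$ since $U_\e(0,\cdot)=G_\e$.

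Uniqueness follows by the same estimates with negligible data. If $V$ is a second solution and $W:=U-V$, then $W$ satisfies $PW=\overline F$ with $\overline F$ negligible and $W(0,\cdot)$ negligible; any representative $W_\e\in H^\infty(\Omega_T)^m$ of $W$ is, for fixed $\e<\e_0$, the unique classical solution of the $\e$-problem with data $(W_\e(0,\cdot),P_\e W_\e)$, so the same chain of inequalities applies. In the $r=0$ estimate the now \emph{fixed} factor $e^{\int_0^t\beta_\e}=O(\e^{-N})$ multiplies $\norm{W_\e(0,\cdot)}{L^2(\mathbb R^n)}^2+\norm{P_\e W_\e}{L^2(\Omega_T)}^2=O(\e^q)$ for every $q$, hence equals $O(\e^\infty)$, and at each higher order only the same fixed inverse powers of $\e$ (from the $L^\infty$-type coefficients) are incurred, so negligibility propagates through the induction and $W=0$ in $\mathcal G_{L^2}(\Omega_T)^m$.

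The one point that deserves attention is the assertion that each $U_\e$ genuinely represents an element of $\mathcal G_{L^2}(\Omega_T)$ — that it is smooth up to $\{t=0\}$ and, together with all its derivatives, square-integrable in $x$ over the finite strip; this follows from combining classical well-posedness in the Sobolev scale with the order-$r$ energy estimates, which are uniform in $t\in[0,T]$. Conceptually, the heart of the matter is that $L^{1,\infty}$-log-type is precisely what matches the Gronwall factor of Lemma~\ref{basic_estimate}~ii): only the time integral $\int_0^t\norm{\cdots}{L^\infty(\mathbb R^n)}\,ds$, rather than the full space–time supremum as in~ii) and~ii'), needs to be $O(\mm{log}(1/\e))$ for the exponential to remain moderate — which is exactly why~ii'') can permit the weaker control in the time variable.
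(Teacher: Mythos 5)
Your proposal is correct and follows essentially the same route as the paper: $\e$-wise classical solvability in the $H^\infty$-scale (the paper invokes Theorem~2.6 of \cite{BG:07} rather than Theorem~2.12), the global-in-space energy estimate (\ref{special_L2_estimate}) with the $L^{1,\infty}$-log-type control of $\beta_\e$, the differentiated systems (\ref{higherorderesti}) and (\ref{higherorderestitime}) from the proof of Theorem~\ref{main_theorem} to propagate moderateness to all derivatives, and the same estimates with negligible data for uniqueness. No gaps.
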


\begin{proof}
Fixing Hermitian representatives of $A^j$ and representatives of $B$, $F$ and $G$, we may use Theorem 2.6 in \cite{BG:07} to provide solutions $U_{\e}\in C^{\infty}([0,T],H^{\infty}(\mathbb R^n))^m$ to the classical initial value problem for each $\e< \e_0$. To show moderateness, we plug $U_{\e}$ into the energy estimate (\ref{special_L2_estimate}) and find
\begin{multline}\label{special_esti_beta}
\norm{U_{\e}}{L^2(\Omega_T)}^2\leq T\sup\limits_{0\leq t\leq T}\norm{U_{\e}(t,\cdot)}{L^2(\mathbb R^n)}^2 \\
\leq
T e^{\beta_{\e}}\big(\norm{G_{\e}}{L^2(\mathbb R^n)}^2+\norm{F_{\e}}{L^2(\Omega_T)}^2 \big)=O(\e^{-m}),
\end{multline}
where $\beta_{\e}:=\norm{\mm{div}A_{\e}-B_{\e}-B^{\ast}_{\e}}{L^{1,\infty}(\Omega_T)}=O(\mm{log}(1/\e))$.
In Claim 1 in the proof of Theorem \ref{main_theorem} it has been shown that the vector $\nabla^rU_{\e}$ satisfies an equation of the form
\begin{equation}
\partial_t \nabla^r U_{\e}+\sum_{j=1}^n \sigmt r A^j_{\e}\partial_{x_j}\!\nabla^rU_{\e}+\widetilde B^{r}_{\e}\nabla^rU_{\e}=Q^{r-1}_{\e} \sigm r U_{\e} +\nabla^r F_{\e}
\end{equation}
where $Q^{r-1}_{\e}$ is a purely spatial partial differential operator of order $r-1$, $\sigmt r A^j_{\e}$ are simply blockdiagonal matrices consisting of $A^j_{\e}$-blocks and $\widetilde B^r_{\e}$ depends solely on $B_{\e}$ and spatial derivatives $\partial_{x_i}A^j_{\e}$. Thus, plugging $\nabla^r U_{\e}$ into  (\ref{special_L2_estimate})  yields
\begin{equation}\label{higher_order_esti_L2_result}
\sup\limits_{0\leq t\leq T}\norm{\nab rU_{\e}(t,\cdot)}{L^2(\mathbb R^n)}^2
 \leq  e^{\beta^{r}_{\e} }\big( \norm{\nab r G_{\e}}{L^2(\mathbb R^n)}^2+
\norm{Q^{r-1}_{\e}\sigm{r}U_{\e}+\nabla^{r} F_{\e}}{L^2(\Omega_T)}^2 \big),
\end{equation}
where $\beta^r_{\e}=1+\norm{\mm{div}\,\sigmt r A_{\e}-\widetilde B^r_{\e}-(\widetilde B^r_{\e})^{\ast}}{L^{1,\infty}(\Omega_T)}$. Iteratively applying (\ref{higher_order_esti_L2_result}) shows that for all $r\in\mathbb N$ there exists $m\in\mathbb N_0$ such that $\norm{\nabla^r U_{\e}}{L^2(\Omega_T)}^2=O(\e^{-m})$. Finally, successively employing equation (\ref{higherorderestitime}) for $l=1,2,3,...$ one finds that the asymptotic growth of $\norm{\partial_t \nabla^r U_{\e}}{L^2(\Omega_T)}^2$, $\norm{\partial_t^2 \nabla^r U_{\e}}{L^2(\Omega_T)}^2$, $\norm{\partial_t^3 \nabla^r U_{\e}}{L^2(\Omega_T)}^2$, ... is also moderate. Altogether we therefore have 
$$
\forall \alpha\in\mathbb N_0^{n+1}
\:\:\exists m\in\mathbb N_0:\:\:\norm{\partial^{\alpha}U_{\e}}{L^2(\Omega_T)}=O(\e^{-m})\quad\mm{as}\quad\e\rightarrow 0$$ 
and hence $[(U_{\e})_{\e}]\in \mathcal G_{L^2}(\Omega_T)$. To show uniqueness, we assume negligible data $(\overline F_{\e})_{\e}\in \mathcal N_{L^2}(\Omega_T)$ and $(\overline G_{\e})_{\e}\in \mathcal N_{L^2}(\mathbb R^n)$. The energy estimates used to prove moderateness then immediately imply negligibility of the corresponding solution $[(\overline U_{\e})_{\e}]$.
\end{proof}

Note that in Theorem \ref{third_theorem}, thanks to the $L^{1,\infty}$-norms in condition ii'') no logarithmic scaling of the mollifier is required to model a lower order coefficient of the form $B(t,x)\approx \delta(t)\widetilde B(x)$ where $\delta$ represents the delta distribution $\widetilde B$ is  bounded.
 
\begin{remark}
For the existence and uniqueness results presented in this section, there  exist versions which are also  global in time. In correspondence with Theorem \ref{main_theorem}, given coefficients $A^j$ and $B$ in $M_m(\mathcal G_{L^{\infty}}(\mathbb R^{n+1}))$, data $F\in\mathcal G(\mathbb R^{n+1})^m$ and $G\in\mathcal G(\mathbb R^n)^m$, one obtains a global solution 
$U\in \mathcal G(\mathbb R^{n+1})^m$ if all $\partial_{x_i}A^j$ as well as the Hermitian part of $B$ are locally  log-type and $A^j_{\e}=O(1)$ as $\e\rightarrow 0$ outside a cylinder $\mathbb R\times B_{R_A}$. Analogously extending the respective asymptotic growth conditions from 
$\Omega_T$ to $\mathbb R^{n+1}$ yields ``global in time''-variants of Theorems \ref{second_theorem} and \ref{third_theorem}. Concerning ``global in time''-solutions,  the requirements on the coefficients with regard to the dependence on $t$ can be somewhat relaxed. It suffices to demand the estimates in the assumptions of the theorems only locally in time, i.e. for all $I\subset\subset \mathbb R$. A similar observation was made  in Remark 1.5.3 in \cite{GKOS:01}. 
\end{remark}

\section{Regularity of the generalized solutions and distributional limits}

To justify the term ``generalized solution'', compatibility with classical smooth and distributional solutions should be investigated. When the coefficients are  smooth, the unique Colombeau solution should be equal to the  respective classical solution in a certain sense. As in \cite{GH:04} and \cite{LO:91}, the following proposition establishes compatibility with the classical results for smooth and distributional data.

\begin{prop}\label{comp_smooth} \label{comp_dist}
\begin{trivlist}
\item{(i)} In Theorem \ref{main_theorem}, additionally assume that $A^j$ and $B$ have components in $ C^{\infty}_b(\overline\Omega_T)$. If $F\in C^{\infty}(\overline\Omega_T)^m$ and $G\in  C^{\infty}(\mathbb R^n)^m$ then the generalized solution $U\in\mathcal G(\Omega_T)^m$ is equal 
to the classical smooth solution.

\item{(ii)} Suppose that $A^j$ and $B$ in Theorem \ref{third_theorem} are smooth. For $s\in \mathbb R$ let $F_0\in L^2([0,T],H^s(\mathbb R^n))^m$ and $G_0\in H^s(\mathbb R^n)^m$ and denote by $U_0$  the unique distributional solution to (\ref{ivp})-(\ref{ivp_ic}) in $ C([0,T],H^s(\mathbb R^n))^m$. Define generalized data by $ F:=[(F_{\e})_{\e}]\in\mathcal G_{L^2}(\Omega_T)^m$ and $ G:=[(G_{\e})_{\e}]\in \mathcal G_{L^2}(\mathbb R^n)^m$, where $F_{\e}$ and $G_{\e}$ are moderate regularizations such that $F_{\e}\rightarrow F_0$ in $L^2([0,T],H^s(\mathbb R^n))^m$ and $G_{\e}\rightarrow G_0$ in $H^s(\mathbb R^n)^m$ as $\e\rightarrow 0$. If $ U=[(U_{\e})_{\e}]$ is the corresponding generalized solution in $\mathcal G_{L^2}(\Omega_T)^m$, then $U_{\e}\rightarrow U_0$ in $ C([0,T],H^s(\mathbb R^n))^m$.
\end{trivlist}
\end{prop}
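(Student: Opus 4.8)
The plan is to prove both compatibility statements by exploiting the $\e$-wise construction of the Colombeau solution: in each case the representative $U_\e$ solves a classical Cauchy problem whose coefficients are \emph{fixed} (independent of $\e$) and whose data converge to the classical/distributional data, so everything reduces to classical continuous dependence on data plus the uniqueness results already established in Section 3.

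For part (i), first observe that since $A^j,B$ have components in $C^\infty_b(\overline\Omega_T)$ the operator $P$ is a classical symmetric hyperbolic operator, and with $F\in C^\infty(\overline\Omega_T)^m$, $G\in C^\infty(\mathbb R^n)^m$ the classical Cauchy problem (\ref{ivp})-(\ref{ivp_ic}) has a unique smooth solution $U_0\in C^\infty(\overline\Omega_T)^m$ (again by Theorem 2.12 in \cite{BG:07}). The embedding $\iota$ of smooth functions into $\mathcal G$ is realized by the constant net $(U_0)_\e = U_0$, and analogously the canonical images of $F$ and $G$ in the Colombeau algebra are represented by the constant nets. But then the constant net $(U_0)_\e$ solves the $\e$-wise problem (\ref{ivp_epsi})-(\ref{ivp_epsi_in}) exactly — not merely up to a negligible net — with these particular representatives. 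Since Theorem \ref{main_theorem} guarantees the Colombeau solution $U$ is independent of the choice of representatives of the coefficients and data (the uniqueness part), $U = [(U_0)_\e] = \iota(U_0)$, which is the assertion. The only minor point to check is that the coefficients $A^j,B$, being in $C^\infty_b$ with bounded derivatives, satisfy the hypotheses (ii) and (iii) of Theorem \ref{main_theorem}; this is immediate since bounded quantities are in particular locally of log-type and $O(1)$ as $\e\to 0$.

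For part (ii), the strategy is the same but the convergence is now in $C([0,T],H^s)^m$ rather than pointwise equality. Fix $s\in\mathbb R$; since $A^j,B$ are smooth (and, as coefficients eligible for Theorem \ref{third_theorem}, bounded with bounded derivatives), the classical theory for symmetric hyperbolic systems — e.g.\ Theorem 2.6 in \cite{BG:07} together with the standard energy estimates in $H^s$ — gives a unique solution $U_0\in C([0,T],H^s(\mathbb R^n))^m$ to (\ref{ivp})-(\ref{ivp_ic}) with data $F_0, G_0$, and gives the continuous-dependence inequality
\begin{equation}\nonumber
\sup_{0\le t\le T}\norm{V(t,\cdot)}{H^s(\mathbb R^n)}^2 \le C\big(\norm{V(0,\cdot)}{H^s(\mathbb R^n)}^2 + \norm{PV}{L^2([0,T],H^s(\mathbb R^n))}^2\big)
\end{equation}
for a constant $C$ depending only on $T$, $s$, and the $C^\infty_b$-norms of the coefficients. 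Applying this to $V_\e := U_\e - U_0$, which satisfies $PV_\e = F_\e - F_0$ and $V_\e(0,\cdot) = G_\e - G_0$ (here $U_\e$ is a representative of the Colombeau solution from Theorem \ref{third_theorem}, hence solves the classical problem with data $F_\e, G_\e$), one obtains
\begin{equation}\nonumber
\sup_{0\le t\le T}\norm{U_\e(t,\cdot) - U_0(t,\cdot)}{H^s(\mathbb R^n)}^2 \le C\big(\norm{G_\e - G_0}{H^s(\mathbb R^n)}^2 + \norm{F_\e - F_0}{L^2([0,T],H^s(\mathbb R^n))}^2\big) \to 0
\end{equation}
as $\e\to 0$ by the assumed convergence of the regularizations. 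This is precisely $U_\e\to U_0$ in $C([0,T],H^s(\mathbb R^n))^m$.

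The main obstacle — really the only non-formal point — is securing the $H^s$ well-posedness and continuous-dependence estimate for the classical symmetric hyperbolic Cauchy problem for \emph{negative} (or non-integer) $s$: Lemma \ref{basic_estimate} only provides the $L^2$ (i.e.\ $s=0$) version. For $s\in\mathbb N_0$ one can differentiate the system $s$ times and apply Lemma \ref{basic_estimate} ii) to $\nabla^s U$ as in the proof of Theorem \ref{third_theorem}; for general real $s$ one passes to $\langle D_x\rangle^s U$ and uses a standard commutator (Moser-type) estimate to control $[\langle D_x\rangle^s, A^j]\partial_{x_j}U$ by $\norm{A^j}{C^\infty_b}\norm{U}{H^s}$, after which Gronwall closes the estimate — this is exactly the classical $H^s$-energy method and is available in the literature cited (cf.\ the treatment underlying Theorem 2.6 in \cite{BG:07}). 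Once this estimate is in hand, both parts follow by the elementary linearity-and-uniqueness argument sketched above, with no further Colombeau-theoretic subtleties, since the coefficients being genuinely smooth removes all $\e$-dependence from the operator.
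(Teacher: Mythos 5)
Your proposal is correct and follows essentially the same route as the paper: part (i) via the constant-net representatives together with uniqueness of the Colombeau solution, and part (ii) by applying the $\e$-independent classical $H^s$ continuous-dependence estimate (Theorem 2.6 in \cite{BG:07}) to the difference $U_\e - U_0$ and letting $\e\to 0$. The only difference is cosmetic: you sketch how the $H^s$-estimate for general real $s$ is obtained, whereas the paper simply cites it.
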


\begin{proof}
(i) Since we may choose the constant nets $(F)_{\e}$ and $(G)_{\e}$ as representatives of the classes of $F$ and $G$ in $\mathcal G$, we obtain the classical smooth solution to problem (\ref{ivp}-\ref{ivp_ic}) as a representative of the unique Colombeau solution.

(ii) By Theorem 2.6 in \cite{BG:07} we have for all $t\in[0,T]$ 
\begin{multline*}
\norm{U_{\e}(t,\cdot)-U_0(t,\cdot)}{H^s(\mathbb R^n)}^2 \\
 \leq C\big(\norm{G_{\e}-G_0}{H^s(\mathbb R^n)}^2+\int_0^t\norm{F_{\e}(s,\cdot)-F_0(s,\cdot)}{H^s(\mathbb R^n)}^2ds \big),
\end{multline*} 
where the constant $C$ does not depend on $\e$, since all coefficient matrices $A^j$ and $B$ are assumed have components in $C^{\infty}(\overline\Omega_T)$. Taking the supremum over all $t\in[0,T]$ and letting $\e\rightarrow 0$ gives the convergence in $ C([0,T],H^s(\mathbb R^n))^m$. 
\end{proof}

We may interpret Proposition \ref{comp_dist}(ii) as a statement on the regularity of the generalized solution, measured in terms of $H^s$-norms of the associated distribution. Yet this concept of regularity is restricted to situations where distributional limits exist and therefore not applicable if initial data or right-hand side are not associated to any distribution.  
Intrinsic regularity theory in Colombeau algebras is based on the  subalgebra $\mathcal G^{\infty}(\Omega)$  of regular generalized functions in $\mathcal G(\Omega)$ and has been investigated in the context of hyperbolic partial differential equations in  \cite{HOP:05,HK:01,O:08,GO:11,GO:11b}. In the study of intrinsic regularity of generalized solutions to partial differential equations, the notion of \emph{slow scale nets} was introduced in \cite{HO:04} and has proven to be essential in many circumstances (cf. \cite{HOP:05,GO:11,GO:11b}).  
A net $(r_{\e})_{\e}$ of complex numbers is said to be of \emph{slow scale} if $|r_{\e}|^p=O(1/\e)$ as $\e\rightarrow 0$ for all $p\geq 0$.  
As in \cite{GH:04} we call a net $(s_{\e})_{\e}$ of complex numbers a \emph{slow-scale log-type net} if there is a slow scale net $(r_{\e})_{\e}$ of real numbers, $r_{\e}\geq 1$, such that
$ |s_{\e}|=O(\log(r_{\e}))$ as $\e\rightarrow 0$. Generalized functions satisfying the moderateness estimates with slow scale  nets in place of the inverse powers $\e^{-N}$ are called \emph{slow scale regular}. They represent a different notion of regularity than regular generalized functions. 

\begin{prop}\label{prop_internal}
In Theorem \ref{main_theorem}, assume  all coefficients $A^j$ and $B$ to be slow scale regular generalized functions. In addition suppose that all log-type conditions are replaced by slow-scale log-type estimates. Then $F\in\mathcal G^{\infty}([0,T]\times \mathbb R^n)^m$ and $G\in \mathcal G^{\infty}(\mathbb R^n)^m$ implies $U\in \mathcal G^{\infty}(\Omega_T)$.
\end{prop}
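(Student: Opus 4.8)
The plan is to mimic the proof of Theorem \ref{main_theorem}, but to track the constants more carefully so that the $\mathcal G^\infty$-estimates come out uniformly in the derivative order. Recall that $U \in \mathcal G^\infty(\Omega_T)$ means: for every $K \subset\subset \Omega_T$ there is a single $N \in \mathbb N_0$ such that $\norm{\partial^\alpha U_\e}{L^\infty(K)} = O(\e^{-N})$ for \emph{all} multi-indices $\alpha$. So first I would fix $K \subset\subset \Omega_T$, build a standard lens $\mathcal L$ around it exactly as in the proof of Theorem \ref{main_theorem} (using assumption iii) and Lemma \ref{smallest_eigenvalue}), and set up the same hierarchy of systems \eqref{higherorderesti} and \eqref{higherorderestitime} for the derivatives $\nabla^r U_\e$ and the mixed derivatives $\partial_t^l \nabla^r U_\e$. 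Since $F \in \mathcal G^\infty$ and $G \in \mathcal G^\infty$, there is a single $N_0$ with $\norm{\partial^\alpha F_\e}{L^\infty(\mathcal L)} = O(\e^{-N_0})$ and $\norm{\partial^\alpha G_\e}{L^\infty(\mathcal H_0)} = O(\e^{-N_0})$ for all $\alpha$; the slow scale regularity of the coefficients gives, likewise, a single slow scale net $(r_\e)_\e$ controlling \emph{all} derivatives $\partial^\alpha A^j_\e$, $\partial^\alpha B_\e$.

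The key point is to re-examine the recursion \eqref{higherorder_estimate}. Feeding $\nabla^r U_\e$ into \eqref{final_L2_estimate} yields a factor $2T e^{2T\alpha^r_\e(\mathcal L)}$ times $\norm{\nabla^r G_\e}{L^2(\mathcal H_0)}^2$ plus $\norm{Q^{r-1}_\e \sigm r U_\e + \nabla^r F_\e}{L^2(\mathcal L)}^2$. Under the slow-scale hypotheses, $\alpha^r_\e(\mathcal L) = 1 + \norm{\mm{div}\,\sigmt r A_\e - \widetilde B^r_\e - (\widetilde B^r_\e)^\ast}{L^\infty(\mathcal L)}$ is a slow-scale log-type net \emph{uniformly in $r$} — it only involves $B_\e$ and the first-order spatial derivatives of the $A^j_\e$, all of which are dominated by $\log(r_\e)$ with the \emph{same} $r_\e$. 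Hence $e^{2T\alpha^r_\e} = O(r_\e^{2T})$, which is $O(\e^{-1/2})$, i.e. itself a slow scale net, independently of $r$. The coefficients of the spatial operator $Q^{r-1}_\e$ are linear combinations of spatial derivatives of $A^j_\e, B_\e$ up to order $r$; each such coefficient is $O(\log(r_\e))$, hence slow scale, so pulling out $L^\infty$-norms of these coefficients in $\norm{Q^{r-1}_\e \sigm r U_\e}{L^2(\mathcal L)}^2$ costs only a slow scale factor and reduces the bound to a (slow scale factor) times $\sum_{|\gamma|\le r-1}\norm{\partial^\gamma \nabla^r U_\e}{L^2(\mathcal L)}^2 = \sum_{r'\le 2r-1}(\text{pieces of }\norm{\nabla^{r'} U_\e}{L^2(\mathcal L)}^2)$. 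So the recursion has the shape: $\norm{\nabla^r U_\e}{L^2(\mathcal L)}^2 \le (\text{slow scale net independent of }r)\cdot\big(O(\e^{-N_0}) + \sum_{r' < 2r}\norm{\nabla^{r'} U_\e}{L^2(\mathcal L)}^2\big)$. Since $\norm{U_\e}{L^2(\mathcal L)}^2 = O(\e^{-N_1})$ for a single $N_1$ (from \eqref{L2_estimate_espilon} with the slow-scale log-type $\alpha_\e$), an induction on $r$ shows $\norm{\nabla^r U_\e}{L^2(\mathcal L)}^2 = O(\sigma_\e^{r} \e^{-\max(N_0,N_1)})$ for a fixed slow scale net $\sigma_\e$ — and a power $\sigma_\e^r$ of a slow scale net is still $O(\e^{-1/2})$, so in fact $\norm{\nabla^r U_\e}{L^2(\mathcal L)}^2 = O(\e^{-N})$ with $N = \max(N_0,N_1)+1$ \emph{uniform in $r$}. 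Then equation \eqref{higherorderestitime} converts time derivatives into spatial ones at the cost of coefficients that are again slow scale, so the mixed derivatives $\partial_t^l \nabla^r U_\e$ inherit the uniform bound, and the Sobolev embedding on the Lipschitz domain $\mathcal L \supseteq K$ upgrades this to $\norm{\partial^\alpha U_\e}{L^\infty(K)} = O(\e^{-N})$ uniformly in $\alpha$. Since $K$ was arbitrary, $U \in \mathcal G^\infty(\Omega_T)$.

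The main obstacle — and the only place where genuine care is needed — is the bookkeeping that makes the slow scale factors accumulated through the recursion \emph{remain slow scale uniformly in the derivative order $r$}. Two facts make this work and must be stated cleanly: (1) a finite product, and more importantly an $r$-th power, of a fixed slow scale net $(r_\e)_\e$ is still slow scale, because $(r_\e^r)^p = r_\e^{rp} = O(1/\e)$ for every $p$; and (2) all the constants entering the recursion ($\alpha^r_\e(\mathcal L)$, the coefficients of $Q^{r-1}_\e$, the coefficients of $R^{r,l}_\e$) are governed by one and the same slow scale net coming from the hypothesis that the coefficients are slow scale regular with slow-scale log-type first spatial derivatives — so there is no "diagonal growth" in which larger $r$ would demand a worse net. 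Once this is isolated, the rest is a routine induction identical in structure to the moderateness argument in the proof of Theorem \ref{main_theorem}, with $\e^{-N}$ replaced throughout by slow scale nets and the final collapse "slow scale $\Rightarrow O(\e^{-N})$ for a single $N$" applied at the end. Uniqueness is not an issue here, as it is already established in Theorem \ref{main_theorem}; the statement only concerns the regularity of that unique solution.
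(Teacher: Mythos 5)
Your overall strategy is the paper's own: fix a standard lens as in Theorem \ref{main_theorem}, use that $F\in\mathcal G^{\infty}$, $G\in\mathcal G^{\infty}$ give a single exponent valid for \emph{all} derivatives of the data on the lens, observe that every coefficient-dependent factor entering \eqref{L2_estimate_espilon}, \eqref{higherorder_estimate} and \eqref{higherorderestitime} is slow scale under the strengthened hypotheses, and then collapse ``slow scale'' into one extra inverse power of $\e$ uniformly over the derivative order before applying the Sobolev embedding on the lens. Your two bookkeeping facts (a fixed power or finite product of a slow scale net is again slow scale; every slow scale net is $O(\e^{-1})$, with a constant that may depend on the derivative order, which the $\mathcal G^{\infty}$ definition permits) are exactly what makes this work, so in substance the argument coincides with the paper's.

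One concrete slip needs repair: you treat $Q^{r-1}_{\e}\sigm{r}U_{\e}$ as if $Q^{r-1}_{\e}$ acted on $\nabla^{r}U_{\e}$, so that derivatives of $U_{\e}$ up to order $2r-1$ would appear, and you accordingly write the recursion with $\sum_{r'<2r}\norm{\nab{r'}U_{\e}}{L^2(\mL)}^2$ on the right-hand side. As stated this is circular --- already for $r=1$ the right-hand side contains the quantity being estimated, so the announced induction on $r$ does not close. In fact $\sigm{r}U_{\e}$ denotes $n^{r}$ copies of $U_{\e}$ itself, and Claim 1 in the proof of Theorem \ref{main_theorem} states that $Q^{r-1}_{\e}$ is a purely spatial operator of order $r-1$; hence $\norm{Q^{r-1}_{\e}\sigm{r}U_{\e}}{L^2(\mL)}$ is bounded by slow-scale coefficient norms times $\sum_{r'\leq r-1}\norm{\nab{r'}U_{\e}}{L^2(\mL)}$, i.e.\ by strictly lower-order terms, and your induction then closes exactly as intended. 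A related minor point: you do not need $\alpha^{r}_{\e}(\mL)$ to be slow-scale log-type \emph{uniformly in $r$} (the block structure picks up $r$-dependent combinatorial constants); it suffices that for each fixed $r$ it is slow-scale log-type, since the final collapse to a fixed inverse power of $\e$ tolerates $r$-dependent constants while keeping the exponent uniform. With these corrections your proposal is the paper's proof.
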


\begin{proof}
Fix a standard lens $\mathcal L$ of thickness $T$ with initial surface $\mathcal H_0$. Then there exists $M\in\mathbb N_0$ such that for all $r$, $l$, we have
$$ \norm{\partial_t^l\nabla^r F_{\e}}{L^2(\mathcal L)}=O(\e^{-M}) \quad\textrm{as well as}\quad \norm{\nabla^r G_{\e}}{L^2(\mathcal H_0)}=O(\e^{-M}).
$$ 
Since all coefficient depending factors in the $L^2$-estimates (\ref{L2_estimate_espilon}) and (\ref{higherorder_estimate}) are of slow scale, for each $\alpha\in\mathbb N_0^n$ we obtain a certain  slow-scale net $(r_{\e})_{\e}$ of positive real numbers such that $\norm{\partial_x^{\alpha}U_{\e}}{L^2(\mL)}=O(r_{\e}\e^{-M})$. Finally, by (\ref{higherorderestitime}) we only pick up slow-scale factors with each time derivative applied to $\partial_x^{\alpha}U_{\e}$, and so we have 
$$ \norm{\partial_t^l\partial_x^{\alpha}U_{\e}}{L^2(\mL)}=O(\e^{-M-1})
$$
for all $l\in\mathbb N_0$ and $\alpha\in\mathbb N_0^n$. This  proves the assertion.
\end{proof}

The intrinsic regularity property holds also for the generalized solutions obtained from Theorems \ref{second_theorem} and \ref{third_theorem} respectively, if all log-type conditions are replaced by slow-scale log-type estimates and all coefficients are slow scale regular generalized functions. Note that all these conditions are automatically satisfied when the coefficients are smooth (like in Proposition \ref{comp_smooth} (i)).

Despite its $\mathcal G^{\infty}$-regularity, the solution in Proposition 
\ref{prop_internal} may not be associated to any distribution. In the remaining part of this section we want to investigate distributional limits of  the generalized solution when the coefficients are non-smooth. To this end we consider the generalized Cauchy problem  (\ref{givp_eq}-\ref{givp_in}) with data and coefficient matrices as regularizations.  For convenience of the reader we first collect some basic properties of   regularizations of  $W^{k,p}$-functions obtained by convolution with a mollifier. 

\begin{lemma}\label{lp_reg_via_con}
Let $\rho\in\mathcal D(\mathbb R^n)$, $\int\rho(x)dx=1$, $\mm{supp}(\rho)\subseteq B_1(0)$, and put $\rho_{\e}(x)=\e^{-n}\rho(x/\e)$. Given $u\in W^{k,p}(\mathbb R^n)$, $ k\in\mathbb N_0$, $1\leq p \leq \infty$ we put $u_{\e}:=u\ast \rho_{\e}$. Then we have 
\begin{itemize}
\item[i)]
$[(u_{\e})_{\e}]\in \mathcal G_{L^{\infty}}(\mathbb R^n)$ and $\norm{\partial^{\alpha}u_{\e}}{W^{k,p}(\mathbb R^n)}=O(\e^{-|\alpha|})$,
\item[ii)] $\norm{u_{\e}-u}{W^{k,q}(\mathbb R^n)}\rightarrow 0$ as $\e\rightarrow 0$ for $u\in W^{k,q}(\mathbb R^n)$ with $1\leq q<\infty$, and
\item[iii)] if $u$ is bounded and uniformly continuous, then $\norm{u_{\e}-u}{L^{\infty}(\mathbb R^n)}\rightarrow 0$ as $\e\rightarrow 0$. In particular,  for $u\in W^{k,\infty}(\mathbb R^n)$ we have $\norm{u_{\e}-u}{W^{k-1,\infty}(\mathbb R^n)}\rightarrow 0$ as $\e\rightarrow 0$. 
\end{itemize}
\end{lemma}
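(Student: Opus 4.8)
The plan is to derive all three items from two standard ingredients: Young's inequality for convolutions, in the form $\norm{f\ast g}{L^p}\leq\norm{f}{L^p}\norm{g}{L^1}$ and its H\"older variant $\norm{f\ast g}{L^\infty}\leq\norm{f}{L^p}\norm{g}{L^{p'}}$ (with $p'$ the conjugate exponent), together with the scaling identity $\norm{\partial^\alpha\rho_\e}{L^r}=\e^{-|\alpha|-n(1-1/r)}\norm{\partial^\alpha\rho}{L^r}$ obtained from the substitution $x\mapsto\e x$; and, for parts (ii) and (iii), the continuity of translations on $L^q$ for $q<\infty$. Throughout one uses that $u_\e=u\ast\rho_\e$ is smooth (convolution of a distribution with a test function) and that for $u\in W^{k,p}$ and multi-indices $\beta+\gamma=\alpha$ with $|\beta|\leq k$ one has $\partial^\alpha u_\e=(\partial^\beta u)\ast(\partial^\gamma\rho_\e)$, so that superfluous derivatives may be shifted onto the mollifier.

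For (i), I would estimate, for $|\beta|\leq k$, $\norm{\partial^\beta\partial^\alpha u_\e}{L^p}=\norm{(\partial^\beta u)\ast(\partial^\alpha\rho_\e)}{L^p}\leq\norm{\partial^\beta u}{L^p}\,\e^{-|\alpha|}\norm{\partial^\alpha\rho}{L^1}$ and then sum, respectively take the maximum, over $|\beta|\leq k$ to get $\norm{\partial^\alpha u_\e}{W^{k,p}}=O(\e^{-|\alpha|})$. Membership in $\mathcal G_{L^\infty}(\mathbb R^n)$ additionally requires an $L^\infty$-bound on every derivative of $u_\e$; this follows from the H\"older variant, $\norm{\partial^\alpha u_\e}{L^\infty}=\norm{u\ast\partial^\alpha\rho_\e}{L^\infty}\leq\norm{u}{L^p}\norm{\partial^\alpha\rho_\e}{L^{p'}}=O(\e^{-|\alpha|-n/p})$ (for $p=\infty$ one leaves $k$ derivatives on $u$ and uses Young directly), so the net $(u_\e)_\e$ is $W^{\infty,\infty}$-moderate and $[(u_\e)_\e]$ is a well-defined element of $\mathcal G_{L^\infty}(\mathbb R^n)$.

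For (ii), by the above shift rule and $\partial^\beta u_\e=(\partial^\beta u)\ast\rho_\e$ for $|\beta|\leq k$, it suffices to prove $\norm{v\ast\rho_\e-v}{L^q}\to0$ for arbitrary $v\in L^q(\mathbb R^n)$, $1\leq q<\infty$, and then apply this to $v=\partial^\beta u$ and sum over $|\beta|\leq k$. Using $\int\rho_\e=1$ one writes $v\ast\rho_\e(x)-v(x)=\int(v(x-y)-v(x))\rho_\e(y)\,dy$, and Minkowski's integral inequality together with $\mm{supp}(\rho_\e)\subseteq B_\e(0)$ bounds the $L^q$-norm by $\norm{\rho}{L^1}\sup_{|y|\leq\e}\norm{v(\cdot-y)-v}{L^q}$, which tends to $0$ by continuity of translation on $L^q$ (a consequence of density of $C_c(\mathbb R^n)$ in $L^q$). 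For (iii), the same pointwise identity gives $|u_\e(x)-u(x)|\leq\sup_{|y|\leq\e}\norm{u(\cdot-y)-u}{L^\infty}$, which tends to $0$ uniformly in $x$ by uniform continuity of $u$; for the final assertion, recall that a function in $W^{1,\infty}(\mathbb R^n)$ has a bounded Lipschitz, hence bounded uniformly continuous, representative, so for $u\in W^{k,\infty}(\mathbb R^n)$ each $\partial^\beta u$ with $|\beta|\leq k-1$ is bounded and uniformly continuous, and applying the first part to $(\partial^\beta u)\ast\rho_\e$ yields $\norm{u_\e-u}{W^{k-1,\infty}}\to0$.

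None of the computations is deep, and I do not anticipate a real obstacle; the only two points worth a sentence of justification are the scaling law for $\norm{\partial^\alpha\rho_\e}{L^r}$ (which fixes the precise powers of $\e$ needed later for the Colombeau moderateness estimates) and, in part (iii), the identification of $W^{1,\infty}(\mathbb R^n)$ with bounded Lipschitz functions that is used to pass from the hypothesis on $u$ to the uniform continuity of its lower-order derivatives.
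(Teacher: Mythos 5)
Your argument is correct, and for part (i) it is essentially the paper's own proof: shift the extra derivatives $\partial^{\alpha}$ onto the mollifier, apply Young's inequality, and use the scaling $\partial^{\alpha}\rho_{\e}=\e^{-|\alpha|}(\partial^{\alpha}\rho)_{\e}$, once in the $L^{p'}$--$L^p$ form for the $L^{\infty}$-moderateness and once in the $L^1$--$L^p$ form for the $W^{k,p}$-bound. Your bookkeeping of the $\e$-powers is in fact slightly more careful than the paper's: the displayed intermediate claim $\norm{\partial^{\alpha}u_{\e}}{L^{\infty}}=O(\e^{-|\alpha|})$ there really comes out as $O(\e^{-|\alpha|-n/p})$ for finite $p$, exactly as you record, which is just as good for $W^{\infty,\infty}$-moderateness. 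The genuine difference is in (ii) and (iii): the paper simply cites Folland, Theorem 8.14, whereas you reprove the standard approximation facts from scratch (the identity $u_{\e}-u=\int(u(\cdot-y)-u(\cdot))\rho_{\e}(y)\,dy$, Minkowski's integral inequality, continuity of translations on $L^q$ for $q<\infty$, respectively uniform continuity for $q=\infty$), and you also make explicit the step the paper leaves to the reader in the ``in particular'' clause of (iii), namely that $W^{1,\infty}(\mathbb R^n)$ consists of bounded Lipschitz functions, so the derivatives of order $\leq k-1$ of $u\in W^{k,\infty}$ are bounded and uniformly continuous. What your route buys is self-containedness and the explicit constants; what the citation buys is brevity. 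The only microscopic blemish is the missing factor $\norm{\rho}{L^1}$ in your pointwise bound in (iii) (the mollifier need not be nonnegative, so $\norm{\rho}{L^1}\geq 1$ in general), which you do include in (ii) and which is of course harmless.
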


\begin{proof}  Here, all norms are taken on the domain $\mathbb{R}^n$.
(i) If $u\in W^{k,p}(\mathbb R^n)$ and $\alpha\in\mathbb N_0^n$ we may estimate  
$\norm{\partial^{\alpha}u_{\e}}{L^{\infty}}=\norm{(\partial^{\alpha}\rho_{\e})\ast u}{L^{\infty}}\leq \norm{\partial^{\alpha}\rho_{\e}}{L^{p'}}\norm{u}{L^p}$ with $1/p+1/p'=1$ by Young's inequality and we have $\partial^{\alpha}\rho_{\e}=\e^{-|\alpha|}(\partial^{\alpha}\rho)_{\e}$ which yields $\norm{\partial^{\alpha}u_{\e}}{L^{\infty}}=O(\e^{-|\alpha|})$, hence $[(u_{\e})_{\e}]\in\mathcal G_{L^{\infty}}(\mathbb R^n)$. Moreover we find 
$$
\norm{\partial^{\alpha}u_{\e}}{W^{k,p}}=\max\limits_{|\beta|\leq k}\norm{\partial^{\alpha+\beta}u_{\e}}{L^p}\leq
\max\limits_{|\beta|\leq k} \norm{(\partial^{\alpha}\rho_{\e})\ast \partial^{\beta}u}{L^p}\leq 
\frac{\norm{\partial^{\alpha}\rho}{L^1}\norm{ u}{W^{k,p}}}{\e^{|\alpha|}}.
$$
 For  (ii) and (iii) we refer to \cite[Theorem 8.14]{Folland:99}.
\end{proof}

The following proposition is concerned with the generalized Cauchy problem  (\ref{givp_eq})-(\ref{givp_in}) when both data and coefficient matrices are obtained from convolution regularizations of non-smooth coefficient matrices $A^j_0$ and $B_0$, right-hand side $F_0$ and initial data  $G_0$.

\begin{prop}\label{prop_dist_1}
In the initial value problem (\ref{givp_eq}-\ref{givp_in}), assume that the entries of $A^j_0$ and  $B_0$ are bounded and Lipschitz continuous with respect to $x$, $G_0\in H^1(\mathbb R^n)^m$, and $F_0\in L^2([0,T],H^1(\mathbb R^{n}))^m$. Define corresponding generalized coefficients and data satisfying the assumptions in Theorem \ref{third_theorem}.
\footnote{The conditions in Theorem \ref{third_theorem} are automatically satisfied when regularizing via convolution with a mollifier as in Lemma \ref{lp_reg_via_con}.}   
Then there exists $U_0\in  C ([0,T],L^2(\mathbb R^n))^m\cap H^1(\Omega_T)^m$ such that $U_{\e}\rightarrow U_0$ in the norm $L^{\infty}([0,T],L^2(\mathbb R^n))$ for every representative $(U_{\e})_{\e}$ of the  unique Colombeau solution $U\in \mathcal G_{L^2}(\Omega_T)^m$. 
\end{prop}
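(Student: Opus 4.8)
The plan is to establish convergence of the regularized solutions $(U_\e)_\e$ to a limit $U_0$ by a compactness-and-uniqueness argument combined with an energy estimate for the \emph{difference} $U_\e - U_{\e'}$. First I would record that, by Lemma \ref{lp_reg_via_con}, the regularized coefficients $A^j_\e = A^j_0 \ast \rho_\e$ and $B_\e = B_0 \ast \rho_\e$ satisfy uniform bounds $\norm{A^j_\e}{L^\infty(\Omega_T)} \leq \norm{A^j_0}{L^\infty}$ and, crucially, $\norm{\partial_{x_i} A^j_\e}{L^\infty(\Omega_T)} \leq \mm{Lip}(A^j_0)$ uniformly in $\e$ (Lipschitz continuity gives bounded first derivatives of the mollification), and similarly $\norm{B_\e + B_\e^\ast}{L^\infty(\Omega_T)}$ is uniformly bounded. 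Hence the key quantity $\beta_\e := \norm{\mm{div}A_\e - B_\e - B_\e^\ast}{L^{1,\infty}(\Omega_T)}$ from the proof of Theorem \ref{third_theorem} is actually \emph{bounded uniformly in $\e$} — not merely $O(\log(1/\e))$. Similarly $\norm{G_\e}{L^2}$ and $\norm{F_\e}{L^2}$ converge (Lemma \ref{lp_reg_via_con}(ii)), hence are bounded uniformly in $\e$. Plugging this into the energy estimate (\ref{special_L2_estimate}) of Lemma \ref{basic_estimate}(ii) yields a bound $\sup_{t\in[0,T]}\norm{U_\e(t,\cdot)}{L^2(\mathbb R^n)}^2 \leq M$ uniform in $\e$, and the equation (\ref{givp_eq}) then controls $\partial_t U_\e$ in $L^2$ as well, so $(U_\e)_\e$ is bounded in $H^1(\Omega_T)^m$.

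Next I would set up the Cauchy estimate. The difference $W_{\e,\e'} := U_\e - U_{\e'}$ solves
\begin{equation}
\nn
\partial_t W_{\e,\e'} + \sum_{j=1}^n A^j_\e \partial_{x_j} W_{\e,\e'} + B_\e W_{\e,\e'} = (F_\e - F_{\e'}) + \sum_{j=1}^n (A^j_{\e'} - A^j_\e)\partial_{x_j} U_{\e'} + (B_{\e'} - B_\e) U_{\e'},
\end{equation}
i.e. $P_\e W_{\e,\e'} = \widetilde F_{\e,\e'}$ with initial data $G_\e - G_{\e'}$. Applying the energy estimate (\ref{special_L2_estimate}) with the operator $P_\e$ (whose coefficient quantity $\beta_\e$ is uniformly bounded) gives
\begin{equation}
\nn
\sup_{t\in[0,T]}\norm{W_{\e,\e'}(t,\cdot)}{L^2(\mathbb R^n)}^2 \leq e^{M'}\big(\norm{G_\e - G_{\e'}}{L^2}^2 + \norm{\widetilde F_{\e,\e'}}{L^2(\Omega_T)}^2\big).
\end{equation}
The first term tends to $0$ as $\e,\e' \to 0$ by Lemma \ref{lp_reg_via_con}(ii). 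For $\widetilde F_{\e,\e'}$: the part $F_\e - F_{\e'} \to 0$ in $L^2(\Omega_T)$; the part $(B_{\e'} - B_\e)U_{\e'}$ is bounded in $L^2$ by $\norm{B_{\e'}-B_\e}{L^\infty}\norm{U_{\e'}}{L^2}$, and $\norm{B_{\e'} - B_\e}{L^\infty} \to 0$ by Lemma \ref{lp_reg_via_con}(iii) (the entries of $B_0$ being bounded, and Lipschitz hence uniformly continuous). The genuinely delicate term is $\sum_j (A^j_{\e'} - A^j_\e)\partial_{x_j} U_{\e'}$: here $\partial_{x_j} U_{\e'}$ is only bounded in $L^2$, not convergent, so one needs $\norm{A^j_{\e'} - A^j_\e}{L^\infty} \to 0$ — which again holds by uniform continuity of the Lipschitz functions $A^j_0$ via Lemma \ref{lp_reg_via_con}(iii). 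Thus $\norm{\widetilde F_{\e,\e'}}{L^2(\Omega_T)} \to 0$, so $(U_\e)_\e$ is Cauchy in $C([0,T],L^2(\mathbb R^n))^m = L^\infty([0,T],L^2(\mathbb R^n))^m$ (after the usual identification), and converges to some $U_0$ in that norm. Since $(U_\e)_\e$ is bounded in $H^1(\Omega_T)^m$, a subsequence converges weakly in $H^1$, and the weak limit must be $U_0$; hence $U_0 \in H^1(\Omega_T)^m$, and one checks $U_0 \in C([0,T],L^2(\mathbb R^n))^m$ directly from the uniform convergence. Finally, independence of $U_0$ from the choice of representative $(U_\e)_\e$ of $U$ follows because any two representatives differ by a negligible net, which the same energy estimate pushes to $0$.

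The main obstacle is handling the term $(A^j_{\e'} - A^j_\e)\partial_{x_j} U_{\e'}$, where only an $L^2$-bound (not convergence) on $\partial_{x_j} U_{\e'}$ is available; the resolution hinges on the observation that bounded Lipschitz coefficients, while not smooth, \emph{are} uniformly continuous, so their mollifications converge \emph{uniformly}, turning this into a (bounded)$\,\times\,$(small in $L^\infty$) product that is small in $L^2$. A secondary point requiring care is justifying that the uniform-in-$\e$ energy bound of Lemma \ref{basic_estimate}(ii) applies to $U_\e$ and $W_{\e,\e'}$: one must verify the regularity hypotheses $U_\e \in C^1([0,T],L^2)^m \cap C^0([0,T],H^1)^m$, which follows from Theorem 2.6 in \cite{BG:07} since the regularized coefficients are smooth with bounded derivatives and the data lie in $H^\infty$; and one must confirm that it is the Lipschitz constants of $A^j_0$, $B_0$ (not their moduli of smoothness) that enter $\beta_\e$, giving the $\e$-uniform exponential factor that makes the whole Cauchy argument work.
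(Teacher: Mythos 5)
Your overall architecture --- a Cauchy-net argument in $C([0,T],L^2(\mathbb R^n))^m$ based on the energy estimate (\ref{special_L2_estimate}) applied to the difference $U_\e-U_{\e'}$, with the delicate term $(A^j_{\e'}-A^j_\e)\partial_{x_j}U_{\e'}$ handled as a product of (uniformly small in $L^\infty$) times (bounded in $L^2$), followed by a weak-compactness/duality argument to place $U_0$ in $H^1(\Omega_T)^m$ --- is exactly the paper's proof. However, there is a genuine gap at the point you yourself identify as load-bearing: the uniform-in-$\e$ bound $\norm{\partial_{x_j}U_\e}{L^2(\Omega_T)}=O(1)$. You claim it by saying that the zeroth-order estimate (\ref{special_L2_estimate}) bounds $\sup_t\norm{U_\e(t,\cdot)}{L^2(\mathbb R^n)}$ and that ``the equation then controls $\partial_t U_\e$, so $(U_\e)_\e$ is bounded in $H^1(\Omega_T)^m$.'' This does not work: the equation expresses $\partial_t U_\e$ through $\sum_j A^j_\e\partial_{x_j}U_\e$, so it can only control the time derivative \emph{after} the spatial gradient has been bounded, and nothing in your first paragraph bounds $\nabla^1 U_\e$ at all. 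A warning sign is that your argument up to that point never uses the hypotheses $G_0\in H^1(\mathbb R^n)^m$, $F_0\in L^2([0,T],H^1(\mathbb R^n))^m$, or the Lipschitz continuity of $B_0$ with respect to $x$ --- these are present precisely to produce the gradient bound.

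The missing step (and the paper's route) is to apply the energy estimate to the differentiated system of Claim 1, i.e.\ (\ref{higherorderesti}) with $r=1$, which gives (\ref{higher_order_esti_L2_result}): $\sup_{0\le t\le T}\norm{\nabla^1 U_\e(t,\cdot)}{L^2(\mathbb R^n)}^2 \le e^{\beta^1_\e}\big(\norm{\nabla^1 G_\e}{L^2(\mathbb R^n)}^2+\norm{Q^0_\e\,\sigm 1 U_\e+\nabla^1 F_\e}{L^2(\Omega_T)}^2\big)$. Here $\beta^1_\e$ involves only $\partial_{x_i}A^j_\e$ and the Hermitian part of $\widetilde B^1_\e$ (hence $B_\e+B_\e^{\ast}$ and again $\partial_{x_i}A^j_\e$), so it is uniformly bounded by the Lipschitz constants, exactly as you observe for $\beta_\e$; the data terms are uniformly bounded because $G_0\in H^1$ and $F_0\in L^2([0,T],H^1)$ (Lemma \ref{lp_reg_via_con}); and the term $Q^0_\e\,\sigm 1 U_\e=-\nabt 1 B_\e\,\sigm 1 U_\e$ is controlled by $\norm{\partial_x B_\e}{L^{\infty}(\Omega_T)}\norm{U_\e}{L^2(\Omega_T)}$, which is where the Lipschitz continuity of $B_0$ in $x$ enters. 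Once this first-order estimate is in place, your step bounding $\partial_t U_\e$ via the equation and the remainder of your argument (Cauchy net, identification of the limit, independence of the representative) go through as written and coincide with the proof in the paper.
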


\begin{proof}
First observe that $\norm{\partial_{j}U_{\e}}{L^2(\Omega_T)}=O(1)$ as $\e\rightarrow 0$ for all $j$. For the spatial derivatives $\partial_{x_j}U_{\e}$  this follows from the estimates (\ref{special_esti_beta}) and  (\ref{higher_order_esti_L2_result}) 
by virtue of  the uniform boundedness of $\beta^1_{\e}=1+\norm{\mm{div}\,\sigmt 1 A_{\e}-\widetilde B^1_{\e}-(\widetilde B^1_{\e})^{\ast}}{L^{1,\infty}(\Omega_T)}$  as well as   $(G_{\e})_{\e}$ and $(F_{\e})_{\e}$ in the respective norms (see Lemma \ref{lp_reg_via_con}). The $L^2$-norm of the time derivative $\partial_{t}U_{\e}$   can then be estimated directly by means of the equation. We show that the $\e$-wise constructed family of solutions to the Cauchy problem (\ref{ivp_epsi}-\ref{ivp_epsi_in}), $(U_{\e})_{\e}$, is a Cauchy net in $ C([0,T],L^2(\mathbb R^n))^m$. 
 For indices $0<\tilde \e<\e$ small enough we obtain from (\ref{special_L2_estimate}) 
\begin{eqnarray*}
\sup\limits_{0\leq t\leq T}\norm{U_{\e}(t,\cdot)-U_{\tilde\e}(t,\cdot)}{L^2(\mathbb R^n)}
\leq C_T\big(\norm{G_{\e}-G_{\tilde\e}}{L^2(\mathbb R^n)}+\norm{F_{\e}-P_{\e}U_{\tilde\e}}{L^2(\Omega_T)} \big),
\end{eqnarray*}
where $C_T$ is independent of $\e$ and $P_{\e}=\partial_t+\sum_{j=1}^n A^j_{\e}\partial_{x_j}+B_{\e} =
\partial_t +\sum_{j=1}^n\big(A^j_{\e}-A^j_{\tilde\e}+A^j_{\tilde\e} \big)\partial_{x_j} + B_{\e}-B_{\tilde\e}+B_{\tilde\e}. 
$
Thus, the energy inequality yields 
\begin{multline*}
\norm{U_{\e}-U_{\tilde\e}}{L^{\infty}([0,T],L^2(\mathbb R^n))} 
\leq
C_T\Big(\norm{G_{\e}-G_{\tilde\e}}{L^2(\mathbb R^n)} 
+\norm{F_{\e}-F_{\tilde\e}}{L^2(\Omega_T)}\\ +\sum_{j=1}^n 
\norm{A^j_{\e}-A^j_{\tilde\e}}{L^{\infty}(\Omega_T)}       \norm{\partial_{x_j}U_{\tilde\e}}{L^2(\Omega_T)} 
+\norm{B_{\e}-B_{\tilde\e}}{L^\infty(\Omega_T)}\norm{U_{\tilde \e}}{L^2(\Omega_T)} \Big)
\end{multline*}
where we omitted the domains in the norms on the right-hand side (all norms are taken over the strip $\Omega_T$ or $\mathbb R^n$ respectively). The convergence properties of the nets  $(G_{\e})_{\e}$, $(F_{\e})_{\e}$, $(A^j_{\e})_{\e}$,  $(B_{\e})_{\e}$ and the uniform boundedness of  $\norm{\partial_{x_j}U_{\tilde\e}}{L^2(\Omega_T)}$ imply that $(U_{\e})_{\e}$ is a Cauchy net in the norm $L^{\infty}([0,T],L^2(\mathbb R^n))$, thus there exists $U_0\in  C([0,T],L^2(\mathbb R^n))^m$ such that $\norm{U_{\e}- U_0}{L^{\infty}([0,T],L^2(\mathbb R^n))}\rightarrow 0$ as $\e \rightarrow 0$. Since $\norm{\partial_{j}U_{\e}}{L^2(\Omega_T)}\leq C$, we have 
 for any $\varphi\in\mathcal D(\Omega_T)^m$,
$$ |\langle \partial_{j} U_0,\varphi\rangle |=|\lim_{\e\rightarrow 0}\langle \partial_{j}U_{\e},\varphi\rangle|\leq \lim_{\e\rightarrow 0}|\langle \partial_{j}U_{\e},\varphi\rangle|\leq C \norm{\varphi}{L^2(\mathbb R^n)}.$$
 Hence $\partial_{j}U_0\in L^2(\mathbb R^n)^m$,  thus  $U_0\in  C([0,T], L^2(\mathbb R^n))^m\cap H^1(\Omega_T)^m$.  
\end{proof}

The conditions on the coefficient matrices in Proposition \ref{prop_dist_1} are typical assumptions in results on weak solutions. For a comparison of  solution concepts for linear first-order hyperbolic differential equations with non-smooth coefficients we refer to \cite{HH:08}.

\begin{cor}\label{class_sol_via_col} The initial value problem (\ref{ivp}-\ref{ivp_ic}) with  $A^j_0$, $B_0$, $G_0$ and $F_0$ as in Proposition \ref{prop_dist_1} has a unique weak  solution $U_0\in  C([0,T], L^2(\mathbb R^n))^m\cap H^1(\Omega_T)^m$, equal to the distributional limit of the generalized solution.
\end{cor}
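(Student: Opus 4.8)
The statement is a corollary of Proposition \ref{prop_dist_1}, so the two tasks are: (a) identify the limit $U_0$ produced there as a \emph{weak solution} of the original Cauchy problem (\ref{ivp}-\ref{ivp_ic}), and (b) prove that this weak solution is \emph{unique} in the class $C([0,T],L^2(\mathbb R^n))^m\cap H^1(\Omega_T)^m$. Existence of the limit and the fact that it equals the distributional shadow of the Colombeau solution are already in hand from Proposition \ref{prop_dist_1}; so the corollary is really about recognizing what equation $U_0$ solves and about uniqueness.

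\textbf{Step 1: $U_0$ solves the PDE.} Start from the regularized equations $\partial_t U_\e+\sum_j A^j_\e\partial_{x_j}U_\e+B_\e U_\e=F_\e$ on $\Omega_T$. Test against $\varphi\in\mathcal D(\Omega_T)^m$. By Proposition \ref{prop_dist_1} we have $U_\e\to U_0$ in $L^\infty([0,T],L^2(\mathbb R^n))$, hence in $L^2(\Omega_T)^m$, and $\partial_{x_j}U_\e$ is bounded in $L^2(\Omega_T)^m$, so $\partial_{x_j}U_\e\rightharpoonup\partial_{x_j}U_0$ weakly; similarly $\partial_t U_\e$ is bounded (estimated from the equation) and converges weakly to $\partial_t U_0$. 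For the coefficients, $A^j_\e\to A^j_0$ and $B_\e\to B_0$ in $L^\infty(\Omega_T)$ by Lemma \ref{lp_reg_via_con}(iii) (the entries being bounded and uniformly continuous in $x$; one should note the mollification is in $x$ only, or handle the $t$-dependence by the stated Lipschitz/boundedness hypotheses), while $F_\e\to F_0$ in $L^2$. Products of an $L^\infty$-convergent net with an $L^2$-weakly convergent net converge weakly in $L^2$, so each term passes to the limit and $U_0$ satisfies $\partial_t U_0+\sum_j A^j_0\partial_{x_j}U_0+B_0U_0=F_0$ in $\mathcal D'(\Omega_T)$, i.e. as an $L^2(\Omega_T)^m$ identity. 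The initial condition $U_0|_{t=0}=G_0$ follows because $U_0\in C([0,T],L^2(\mathbb R^n))^m$, $U_\e(0,\cdot)=G_\e\to G_0$ in $L^2$, and $U_\e\to U_0$ uniformly in $t$ with values in $L^2$.

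\textbf{Step 2: uniqueness.} Suppose $V_1,V_2$ are two weak solutions in $C([0,T],L^2(\mathbb R^n))^m\cap H^1(\Omega_T)^m$; set $W=V_1-V_2$, which solves $\partial_t W+\sum_j A^j_0\partial_{x_j}W+B_0W=0$ with $W|_{t=0}=0$. Since $A^j_0,B_0$ are merely Lipschitz in $x$ (not smooth), one cannot invoke the classical energy identity directly, but the natural device is to apply the energy estimate (\ref{special_L2_estimate}) of Lemma \ref{basic_estimate}(ii) to the regularized operators $P_\e=\partial_t+\sum_j A^j_\e\partial_{x_j}+B_\e$ evaluated at $W$: writing $P_\e W = (P_\e-P_0)W=\sum_j(A^j_\e-A^j_0)\partial_{x_j}W+(B_\e-B_0)W$, one has $\|P_\e W\|_{L^2(\Omega_t)}\to 0$ because $W\in H^1$ and the coefficient differences go to $0$ in $L^\infty$. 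The constant $\beta_\e$ in (\ref{special_L2_estimate}) stays bounded (the coefficients $\mathrm{div}\,A_\e-B_\e-B_\e^\ast$ are bounded in $L^{1,\infty}$ since $A^j_0$ is Lipschitz and $B_0$ bounded), so letting $\e\to 0$ gives $\|W(t,\cdot)\|_{L^2(\mathbb R^n)}^2\le e^{\int_0^t\beta(s)ds}\cdot 0=0$, hence $W\equiv 0$. (Applying (\ref{special_L2_estimate}) requires $W\in C^1([0,T],L^2)\cap C^0([0,T],H^1)$; since we only know $W\in C([0,T],L^2)\cap H^1(\Omega_T)$, one either mollifies $W$ as well and passes to the limit, or — cleaner — notes that the energy estimate extends to this regularity class by density, which is the one technical point worth spelling out.)

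\textbf{Main obstacle.} The genuinely delicate point is the uniqueness argument: the energy estimate in Lemma \ref{basic_estimate}(ii) is stated for smooth operators and for $U$ of $C^1_tL^2\cap C^0_tH^1$ regularity, whereas here both the operator and the candidate solution are of low regularity. Reconciling this — by the double mollification $P_\e$ acting on a mollified $W$, controlling the commutator/error terms, and then a diagonal limit — is where the real work lies; the identification of $U_0$ as a weak solution in Step 1 is comparatively routine given Proposition \ref{prop_dist_1}. Everything else (the limit exists, is in the stated space, and is the distributional shadow of the Colombeau solution) is quoted verbatim from Proposition \ref{prop_dist_1}.
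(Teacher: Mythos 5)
Your Step 1 is essentially the paper's own argument: the strong $L^\infty(\Omega_T)$ convergence of $A^j_\varepsilon$, $B_\varepsilon$, the $L^2(\Omega_T)$-boundedness (hence weak convergence) of $\partial_{x_j}U_\varepsilon$ and $\partial_t U_\varepsilon$, and the strong convergence $U_\varepsilon\to U_0$ from Proposition \ref{prop_dist_1} let one pass to the limit term by term, and time-continuity gives $U_0|_{t=0}=G_0$; the paper does the same, extracting the $H^1(\Omega_T)$ weak-$\ast$ limit along a subsequence and pairing $\partial_{x_j}U_\varepsilon$ against $(A^j_\varepsilon-A^j_0)\psi$ exactly as you do.

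For uniqueness, however, you take a genuinely different route and stop at its crux. You reduce to the homogeneous problem for $W=V_1-V_2$ and want to apply (\ref{special_L2_estimate}) to $W$ itself, but Lemma \ref{basic_estimate}(ii) is stated only for $C^1([0,T],L^2)\cap C^0([0,T],H^1)$ functions, and you explicitly defer the extension to $W\in C([0,T],L^2(\mathbb R^n))^m\cap H^1(\Omega_T)^m$ (``by density'', ``double mollification \dots where the real work lies''). As written, the central step of the uniqueness proof is therefore missing. The paper sidesteps the issue entirely: given any weak solution $V_0$ in the stated class, it regularizes $V_0$ to smooth $V_\varepsilon$ converging in $C([0,T],L^2(\mathbb R^n))^m$ and in $H^1(\Omega_T)^m$, and applies the $\varepsilon$-wise energy estimate to the \emph{smooth} difference $U_\varepsilon-V_\varepsilon$; the data $G_\varepsilon-V_\varepsilon|_{t=0}$ and $F_\varepsilon-P_\varepsilon V_\varepsilon$ tend to $0$ in the relevant $L^2$-norms (here $P_\varepsilon V_\varepsilon\to P V_0=F_0$ strongly, because $V_0\in H^1(\Omega_T)^m$ and the coefficients converge uniformly), while $\beta_\varepsilon$ stays bounded, so $\sup_{0\le t\le T}\|U_\varepsilon(t,\cdot)-V_\varepsilon(t,\cdot)\|_{L^2(\mathbb R^n)}\to 0$ and hence $V_0=U_0$. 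Thus the energy estimate is only ever used on smooth functions and no commutator or density argument is needed. Your route can in fact be completed, and more easily than you suggest: mollify $W$ at fixed $\varepsilon$ (no commutators arise in that limit, since the coefficients of $P_\varepsilon$ are fixed and smooth), pass $\delta\to 0$ using $W\in H^1(\Omega_T)^m$ and the vanishing trace at $t=0$, then let $\varepsilon\to 0$ using $\|(P_\varepsilon-P)W\|_{L^2(\Omega_T)}\to 0$ and the uniform bound on $\beta_\varepsilon$ — but this is precisely the argument your proposal would have to supply and does not.
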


\begin{proof} 
We show that the distributional limit of the generalized solution
$U=[(U_{\e})_{\e}]$ is the unique weak solution. First observe that
the limit $U_0$ of $U_{\e}$ is  continuous in time,  so it satisfies the initial condition, i.e. $U_0|_{t=0}=
G_0$. Moreover we have $U_{\e}\rightarrow U_0$ in $L^2(\Omega_T)$  and thus also $B_{\e}U_{\e}\rightarrow B_0U_0$ in $L^2(\Omega_T)^m$. 
The proof of Proposition \ref{prop_dist_1} shows that $\norm{U_{\e}}{H^1(\Omega_T)}$ is uniformly bounded. By the weak compactness theorem there exists a weakly* convergent subsequence  
$(U_{\frac{1}{n_k}})_{k\in\mathbb N}$ with limit $\widetilde U_0\in H^1(\Omega_T)^m$ (cf. Theorem 6.64 in \cite{RR:03}). Thus $\partial_tU_{\frac{1}{n_k}}\rightarrow \partial_t\widetilde U_0$ weakly* in 
$L^2(\Omega_T)^m$ 
and  $\widetilde U_0 = U_0 \in  C([0,T],L^2(\mathbb R^n))^m\cap H^1(\Omega_T)^m$.  

We have $|\langle \partial_{x_j}U_{\frac{1}{n_k}},(A^j_{\frac{1}{n_k}}-A^j_0)\psi\rangle|\leq \norm{\partial_{x_j}U_{\frac{1}{n_k}}}{L^2(\Omega_T)}\norm{(A^j_{\frac{1}{n_k}}-A^j_0)\psi}{L^2(\Omega_T)}$ for any $\psi\in L^2(\Omega_T)^m$, hence 
$ \langle A^j_{\frac{1}{n_k}}\partial_{x_j}U_{\frac{1}{n_k}},\psi\rangle
= \langle \partial_{x_j}U_{\frac{1}{n_k}},(A^j_{\frac{1}{n_k}}-A^j_0)\psi\rangle+ \langle \partial_{x_j}U_{\frac{1}{n_k}},A^j_0\psi\rangle$ converges to $\langle \partial_{x_j}U_0,A^j_0\psi\rangle$. Therefore we obtain 
\begin{multline*}
\langle (\partial_t +\sum_{j=1}^k A^j_0\partial_{x_j} +B_0) U_0,\psi\rangle\\
= \lim\limits_{\e\rightarrow 0}\Big( \langle \partial_tU_{\e},\psi\rangle + \sum_{j=1}^n \langle A^j_{\e} \partial_{x_j}U_{\e},\psi\rangle +\langle B_{\e}U_{\e},\psi\rangle  \Big)=\lim\limits_{\e\rightarrow 0} \langle F_{\e},\psi\rangle=\langle F_0,\psi\rangle.
\end{multline*}

To prove uniqueness, suppose  $V_0\in   C([0,T],L^2(\mathbb R^n))^m\cap H^1(\Omega_T)^m$ is a solution such that $PV_0=F_0$ and $V_0|_{t=0}=G_0$. We may regularize this solution so that $V_{\e}\rightarrow V_{0}$ in $ C([0,T],L^2(\mathbb R^n))^m$ and in $H^1(\Omega_T)^m$. Then $V_{\e}|_{t=0}\rightarrow G_{0}$ in $L^2(\mathbb R^n)^m$ and necessarily $\lim\limits_{\e\rightarrow 0}\Big( \partial_tV_{\e} + \sum_{j=1}^n  A^j_{\e} \partial_{x_j}V_{\e} +\ B_{\e}V_{\e}\Big)=F_0$ in $L^2(\Omega_T)^m$ irrespective of the regularizations chosen.
Applying the basic energy estimate (\ref{special_L2_estimate}) to the difference $U_{\e}-V_{\e}$ yields
$$ \sup\limits_{0\leq t\leq T}\norm{U_{\e}(t,\cdot)-V_{\e}(t,\cdot)}{L^2(\mathbb R^n)}\leq C_T \big(\norm{G_{\e}-V_{\e}}{L^2(\mathbb R^n)}+\norm{F_{\e}-PV_{\e}}{L^2(\Omega_T)} \big)\rightarrow 0 
$$
as $\e\rightarrow 0$ and therefore $U_0=V_0$ in $ C([0,T],L^2(\mathbb R^n))^m$.
\end{proof}

Note that the statement in Corollary \ref{class_sol_via_col} is not  just a compatibility result, in fact we directly obtain a unique weak solution to the initial value problem with non-smooth coefficients merely by studying properties of the generalized solution when the coefficients and data are regularized distributions. In case of smooth coefficients, the methods applied in the proofs of Proposition \ref{prop_dist_1} and Corollary \ref{class_sol_via_col} would lead to the corresponding classical existence and uniqueness results as well (see Proposition \ref{comp_smooth}).

\begin{remark}
(a) In order to get more information on the regularity of the distributional shadow $U_0$ and its dependence on the regularity of the coefficients $A^j_0$, $B_0$ and data $F_0$, $G_0$, it is essential to have precise estimates on the speed of convergence of the regularized objects (cf. the notion of ``strong association'' in \cite{Scarpi:96}). For example, rapid convergence in the principal part  guarantees the existence of a distributional shadow under regularity assumptions on the initial data $G_0$ and right-hand side $F_0$ which are weaker than those in Proposition \ref{prop_dist_1}. To be more precise, we have the following statement:
In Theorem \ref{third_theorem}, let all $A^j_0$ be Lipschitz continuous and let $B_0$ be uniformly continuous and bounded. Given $G_0\in L^2(\mathbb R^n)^m$ and $F_0\in L^2(\Omega_T)^m$, define generalized coefficients $A$, $B$ and data $F$, $G$  such that
as $\e\rightarrow 0$ 
$$
\norm{A^j_{\e}-A^j_0}{L^{\infty}(\Omega_T)}\mm{max}\big(\norm{\partial_{x_j}B_{\e}}{L^{\infty}(\Omega_T)},\norm{G_{\e}}{H^1(\mathbb R^n)},\norm{F_{\e}}{L^2([0,T],H^1(\mathbb R^n))}\big)\rightarrow 0.
$$
 Then the corresponding generalized solution $U=[(U_{\e})_{\e}]$ still has a distributional shadow $U_0\in L^2(\Omega_T)^m$ such that $U_{\e}\rightarrow U_0$ in $L^2(\Omega_T)^m$ for any of its representatives $(U_{\e})_{\e}$.  

The proof strategy is the same as in Proposition \ref{prop_dist_1}, showing that $(U_{\e})_{\e}$ is a Cauchy net in $L^2(\Omega_T)$, the main difference now being that $\partial_{x_j} U_\e$ is not bounded, but kept under control by the factors stemming from the coefficients.

(b)
To illustrate that the conditions discussed in (a) are realistic, we gather the following estimates on the convergence speed of convolution regularizations: Let $\rho\in\mathcal S(\mathbb R^n)$ with $\int \rho(x)dx=1$ and $\int x^{\alpha}\rho(x)dx=0$ for all $|\alpha|< m\in \mathbb N$, $s\in\mathbb R$ and put $\rho_{\e}:=\sigma_{\e}^{-n}\rho(\frac{\cdot}{\sigma_{\e}})$
where $\sigma_{\e}\rightarrow 0$ as $\e\rightarrow 0$. Then we have
\begin{trivlist}
\item i)  $u\in W^{m,\infty}(\mathbb R^n)$ $\Longrightarrow$  $\norm{\rho_{\e}\ast u-u}{L^{\infty}(\mathbb R^n)}=O(\sigma_{\e}^{m})$ as $\e\rightarrow 0$,
\item ii) $u\in L^1(\mathbb R^n)$ $\Longrightarrow$ $\norm{\rho_{\e}\ast u-u}{H^s(\mathbb R^n)}=O(\sigma_{\e}^{m})$ as $\e\rightarrow 0$ for all $s\leq-\frac{n}{2}-1-m$,
\item iii) $u\in H^{s}(\mathbb R^n)$ $\Longrightarrow$ $\norm{\rho_{\e}\ast u-u}{H^s(\mathbb R^n)}\rightarrow 0$ and $\norm{\rho_{\e}\ast u-u}{H^{s-m}(\mathbb R^n)}=O(\sigma_{\e}^{m})$ as $\e\rightarrow 0$.
\end{trivlist}

Here, i) is shown by Taylor expansion of $u$, ii) can be proved by considering the action on a test function and Taylor expansion of the latter, and iii) follows from the definition of the Sobolev norms in terms of the Fourier transform.     
\end{remark}

\bibliography{chbib}{}

\newcommand{\SortNoop}[1]{}
\begin{thebibliography}{10}

\bibitem{AF:03}
R.~Adams and J.~J.~F. Fournier.
\newblock {\em Sobolev Spaces}.
\newblock Elsevier, Oxford, second edition, 2003.

\bibitem{BG:07}
S.~Benzoni-Gavage and D.~Serre.
\newblock {\em Multidimensional hyperbolic partial differential equations}.
\newblock Clarendon Press, Oxford, 2007.

\bibitem{Colombeau:84}
J.~F. Colombeau.
\newblock {\em New generalized functions and multiplication of distributions}.
\newblock North-Holland, Amsterdam, 1984.

\bibitem{Colombeau:85}
J.~F. Colombeau.
\newblock {\em Elementary introduction to new generalized functions}.
\newblock North-Holland, 1985.

\bibitem{CO:90}
J.~F. Colombeau and M.~Oberguggenberger.
\newblock On a hyperbolic system with a compatible quadratic term: generalized
  solutions, delta waves, and multiplication of distributions.
\newblock {\em Comm. PDE}, 15(7):905--938, 1990.

\bibitem{Folland:99}
G.~B. Folland.
\newblock {\em Real analysis}.
\newblock Pure and Applied Mathematics (New York). John Wiley \& Sons Inc., New
  York, second edition, 1999.

\bibitem{Friedlander:75}
F.~G. Friedlander.
\newblock {\em The wave equation on a curved space-time}.
\newblock Cambridge University Press, first edition, 1975.

\bibitem{Garetto:05b}
C.~Garetto.
\newblock Topological structures in {C}olombeau algebras: topological
  {$\tilde{\Bbb C}$}-modules and duality theory.
\newblock {\em Acta Appl. Math.}, 88(1):81--123, 2005.

\bibitem{GO:11b}
C.~Garetto and M.~Oberguggenberger.
\newblock Generalised {F}ourier integral operator methods for hyperbolic
  equations with singularities.
\newblock {\em preprint}, 2011.
\newblock arXiv:1104.3035v1 [math.AP].

\bibitem{GO:11}
C.~Garetto and M.~Oberguggenberger.
\newblock Symmetrisers and generalised solutions for strictly hyperbolic
  systems with singular coefficients.
\newblock {\em preprint}, 2011.
\newblock arXiv:1104.2281v1 [math.AP].

\bibitem{Gramchev:94}
T.~Gramchev.
\newblock Classical solutions to singular hyperbolic systems modelling acoustic
  wave propagation.
\newblock In H.~{Beirao da Veiga} and {et al.}, editors, {\em Proceedings of
  the workshop on qualitative aspects and applications of nonlinear evolution
  equations, ICTP, Trieste, Italy, May 3-14, 1993}, pages 143--148, Singapore,
  1994. World Scientific.

\bibitem{GMS:09}
J.~D.~E. Grant, E.~Mayerhofer, and R.~Steinbauer.
\newblock The wave equation on singular space-times.
\newblock {\em Comm. Math. Phys.}, 285(2):399--420, 2009.

\bibitem{GKOS:01}
M.~Grosser, M.~Kunzinger, M.~Oberguggenberger, and R.~Steinbauer.
\newblock {\em Geometric theory of generalized functions}.
\newblock Kluwer, Dordrecht, 2001.

\bibitem{HH:08}
S.~Haller and G.~H{\"o}rmann.
\newblock Comparison of some solution concepts for linear first-order
  hyperbolic differential equations with non-smooth coefficients.
\newblock {\em Publ. Inst. Math. (Beograd) (N.S.)}, 84(98):123--157, 2008.

\bibitem{HE:73}
S.~W. Hawking and G.~F.~R. Ellis.
\newblock {\em The large scale structure of space-time}.
\newblock Cambridge University Press, Cambridge, 1973.

\bibitem{GH:04}
G.~H\"{o}rmann.
\newblock First-order hyperbolic pseudodifferential equations with generalized
  symbols.
\newblock {\em J. Math. Anal. Appl.}, 293:40--56, 2004.

\bibitem{Hoermann:10}
G.~H{\"o}rmann.
\newblock {T}he {C}auchy problem for {S}chr\"odinger-type partial differential
  operators with generalized functions in the principal part and as data.
\newblock {\em Monatsh. Math.}, 163:445--460, 2011.

\bibitem{HdH:01}
G.~H{\"o}rmann and M.~V. de~Hoop.
\newblock Microlocal analysis and global solutions of some hyperbolic equations
  with discontinuous coefficients.
\newblock {\em Acta Appl. Math.}, 67:173--224, 2001.

\bibitem{HK:01}
G.~H{\"o}rmann and M.~Kunzinger.
\newblock Microlocal analysis of basic operations in {C}olombeau algebras.
\newblock {\em J. Math. Anal. Appl.}, 261:254--270, 2001.

\bibitem{HO:04}
G.~H{\"o}rmann and M.~Oberguggenberger.
\newblock Elliptic regularity and solvability for partial differential
  equations with {C}olombeau coefficients.
\newblock {\em Electron. J. Diff. Eqns.}, 2004(14):1--30, 2004.

\bibitem{HOP:05}
G.~H{\"{o}}rmann, M.~Oberguggenberger, and S.~Pilipovi{\'{c}}.
\newblock Microlocal hypoellipticity of linear partial differential operators
  with generalized functions as coefficients.
\newblock {\em Trans. Amer. Math. Soc.}, 358:3363--3383, 2006.

\bibitem{KH:01}
I.~Y. Kmit and G.~H{\"o}rmann.
\newblock Semilinear hyperbolic systems with singular non-local boundary
  conditions: reflection of singularities and delta waves.
\newblock {\em Z. Anal. Anwendungen}, 20(3):637--659, 2001.

\bibitem{LO:91}
F.~Lafon and M.~Oberguggenberger.
\newblock Generalized solutions to symmetric hyperbolic systems with
  discontinuous coefficients: the multidimensional case.
\newblock {\em J. Math. Anal. Appl.}, 160:93--106, 1991.

\bibitem{Mayerhofer:08}
E.~Mayerhofer.
\newblock On {L}orentz geometry in algebras of generalized functions.
\newblock {\em Proc. Roy. Soc. Edinburgh Sect. A}, 138(4):843--871, 2008.

\bibitem{O:88}
M.~Oberguggenberger.
\newblock Hyperbolic systems with discontinuous coefficients: examples.
\newblock In B.~Stankovi{\'{c}}, E.~Pap, S.~Pilipovi{\'{c}}, and V.~S.
  Vladimirov, editors, {\em Generalized Functions, Convergence Structures, and
  Their Applications}, pages 257--266, New York, 1988. Plenum Press.

\bibitem{O:89}
M.~Oberguggenberger.
\newblock Hyperbolic systems with discontinuous coefficients: generalized
  solutions and a transmission problem in acoustics.
\newblock {\em J. Math. Anal. Appl.}, 142:452--467, 1989.

\bibitem{O:92}
M.~Oberguggenberger.
\newblock {\em Multiplication of distributions and applications to partial
  differential equations}.
\newblock Longman Scientific {\&} Technical, 1992.

\bibitem{O:08}
M.~Oberguggenberger.
\newblock Hyperbolic systems with discontinuous coefficients: generalized
  wavefront sets.
\newblock In L.~Rodino and M.~W. Wong, editors, {\em New Developments in
  Pseudo-Differential Operators}, volume 189 of {\em Operator Theory, Advances
  and Applications}, pages 185--200. Birkh{\"a}user, 2009.

\bibitem{RR:03}
M.~Renardy and R.~C. Rogers.
\newblock {\em An introduction to partial differential equations}.
\newblock Springer, second edition, 2003.

\bibitem{Rudin:87}
W.~Rudin.
\newblock {\em Real and complex analysis}.
\newblock McGraw-Hill Book Co., New York, third edition, 1987.

\bibitem{Scarpi:96}
D.~Scarpalezos.
\newblock Some remarks on functoriality of {C}olombeau's construction;
  topological and microlocal aspects and applications.
\newblock {\em Integral Transforms Spec. Funct.}, 6(1-4):295--307, 1998.

\bibitem{VW:00}
J.~A. Vickers and J.~P. Wilson.
\newblock Generalized hyperbolicity in conical spacetimes.
\newblock {\em Class.~Quantum.~Grav.}, {\bf 17}:1333--1360, 2000.

\end{thebibliography}

\bibliographystyle{abbrv}

\end{document}